\newtheorem{theorem}{Theorem}[section]
\newtheorem{lemma}[theorem]{Lemma}
\newtheorem{cor}[theorem]{Corollary}
\newenvironment{proof}{\par\noindent\textbf{Proof}\hspace{1em}}{\qed}
\def\<{\langle}
\def\>{\rangle}
\newcommand{\cF}{\mathcal{F}}
\newcommand{\K}{\mathbb{K}}
\renewcommand{\L}{\mathbb{L}}
\newcommand{\cS}{\mathcal{S}}
\newcommand{\cB}{\mathcal{B}}
\newcommand{\cP}{\mathcal{P}}
\newcommand{\cL}{\mathcal{L}}
\newcommand{\HJ}{imbrex }
\newcommand{\N}{\mathbb{N}}
\def\EQ{symp}
\def\qed{{\hfill\hphantom{.}\nobreak\hfill$\Box$}}
\begin{document}

\author{Jeroen Schillewaert \thanks{Research supported by Marie Curie IEF grant GELATI (EC grant nr 328178)}\\ 
\small Department of Mathematics\\ [-0.8ex]
\small Imperial College\\ [-0.8ex]
\small London, England\\ 
\small\tt jschillewaert@gmail.com\\
\and 
Hendrik Van Maldeghem \\ 
\small Department of Mathematics\\[-0.8ex]
\small Ghent University\\[-0.8ex]
\small Ghent, Belgium\\
\small \tt hvm@cage.ugent.be
}

\title{Imbrex Geometries}
\date{}
\maketitle

\begin{abstract}
We introduce an axiom on strong parapolar spaces of diameter 2, which arises naturally in the framework of Hjelmslev geometries. This way, we characterize the Hjelmslev-Moufang plane and its relatives (line Grassmannians, certain half-spin geometries and Segre geometries). At the same time we provide a more general framework for a Lemma of Cohen, which is widely used to study parapolar spaces.
As an application, if the geometries are embedded in projective space, we provide a common characterization of (projections of) Segre varieties, line Grassmann varieties, half-spin varieties of low rank, and the exceptional variety $\mathcal{E}_{6,1}$ by means of a local condition on tangent spaces.
\end{abstract}

\section{Introduction}
Springer and Veldkamp \cite{Spr-Vel:68} introduced the \emph{Hjelmslev-Moufang planes} as geometries resembling Hjelmslev planes (because lines can meet in more than one point) and Moufang planes (because the coordinating structure is also an octonion algebra, though split, whereas the usual Moufang projective planes are defined over non-split ones). In the present paper we push the analogy a little bit further, using the more modern notion of \emph{parapolar spaces}. Indeed, although the Hjelmslev-Moufang planes are not Hjelmslev planes themselves,  we show that they satisfy a far more intuitive property of ``realistic geometry'', as Hjelmslev \cite{Hjel} himself was aiming at, and that property follows from the definition of Hjelmslev planes, but it does not characterize Hjelmslev planes. The basic observation made by Hjelmslev was that, if one draws lines ``close'' to each other (meaning that the sharp angle they define is very small), then it is hard to identify the intersection point, and it looks as if the lines have a little segment in common. Dually, if two points are very close to each other, then the joining line is hard to identify. Hjelmslev, and later Klingenberg \cite{Klingen}, included these observations in their geometries by introducing a neighbor relation which precisely indicates when two points and two lines are close to one another. However, in the approach of Hjelmslev and Klingenberg, the neighbor relation is an equivalence relation, and the equivalence classes define a ordinary projective or affine plane (the ``underlying'' plane). A consequence of this is the following intuitive property:

\begin{itemize}
\item[(Imb)]  \em Let $x$ be a point not neighboring two neighboring points $y_1,y_2$. Then there are unique lines $L_i$ joining $x$ and $y_i$, $i=1,2$, and $L_1$ is a neighbor of $L_2$.  
\end{itemize}

The proof is very simple: in the underlying plane, $y_1$ and $y_2$ define the same point, distinct from the point defined by $x$, hence the lines $L_1$ and $L_2$ must define the same lines and are consequently contained in the same equivalence class. 

We propose to take (Imb) as an axiom and combine this with the theory of parapolar spaces, where we want to view the symplecta as the lines of our geometry. In the approach of Hjelmslev, any two points are joined by at least one line; hence we consider strong parapolar spaces of diameter 2. Next, one must  define the notion of ``neighboring''. To do this, we refer back to the observation of Hjelmslev: if two points define a unique line, then these points are far enough from one another. Translated to the framework of parapolar spaces, two points at distance 2 define a unique symplecton. Consequently it is natural to define that two points are neighboring if they are collinear in the parapolar space.  For the dual, we take into account the residual nature of parapolar spaces associated to building geometries. More precisely, in the situations we are interested in, the residue in a point is again a parapolar space, and with the definition of neighboring points above, the neighbor relation on points is preserved under taking residues (i.e., if $y_1,y_2$ are two (non-)neighboring points collinear to a point $x$, then in the residue at $x$, the points defined by $y_1$ and $y_2$ are (non-)neighboring). So we also want the neighboring relation between symplecta to be preserved under taking residues. The only sensible way to define two symplecta to be neighbors then is when they intersect in a maximal singular subspace. This also implies that we should consider parapolar spaces of constant symplectic rank. 

As a side remark, we mention that, in fact, the situation just described resembles in a certain sense better the reality than was the case with the Hjelmslev and Klingenberg planes. Indeed, in our case the neighbor relation is certainly not transitive, and this is more realistic: if one draws a number of points consecutively close to each other, then the first point may well be far from the last one. 

Now we translate the property (Imb) to the framework of parapolar spaces with the neighboring relation as just derived. So let $x$ be a point of a parapolar space of diameter 2, then $y_1,y_2$ are two collinear points (collinearity in the parapolar space), both at distance 2 from $x$. If there was a point on the line $y_1y_2$ collinear to $x$, then the symplecta through $x,y_1$ and $x,y_2$, which we denote by $\xi(x,y_1)$ and $\xi(x,y_2)$, respectively, would coincide, and hence be neighboring trivially. So we may assume that every point of the line $y_1y_2$ is at distance 2 from $x$. Then the symplecta $\xi(x,y_1)$ and $\xi(x,y_2)$ intersect in a maximal singular subspace of both of them. 

In conclusion, in this paper we will study strong parapolar spaces 
with diameter 2 satisfying the following additional property, which we again refer to as (Imb), as an abbreviation of ``Imbrex''.

\begin{itemize}
\item[(Imb)]  \em Let $x$ be a point not collinear with any point of the line $L$. Let $y_1,y_2$ be distinct points on $L$. Then the symplecta $\xi(x,y_1)$ and $\xi(x,y_2)$ intersect in a singular subspace, which is maximal for both of them.   
\end{itemize}

We shall also need the more technical assumption that every set of mutually collinear points is contained in a maximal singular subspace. To ensure this we add the axiom that every sequence of nested singular subspaces is finite (it is the close analogue of the axiom for polar spaces ensuring finite rank, and all main examples satisfy it), and we call these parapolar spaces  {\em \HJ geometries}.  In fact, if the symplectic rank is at least 3, then one can classify imbrex geometries using a rather powerful theorem of Cohen \& Cooperstein \cite{Cohen,COCO} as updated by Shult \cite{Shu:12}. The merit of Property (Imb), however, lies in the fact that it allows to study the symplectic rank 2 case in a more general and conceptual way than was done before. Moreover, it is ready-made to generalize the characterization of Segre varieties  in \cite{Tha-Mal:13} to the other varieties in the second row of the split version of the extended Freudenthal-Tits Magic Square (FTMS), i.e., line Grassmannians of projective spaces and the variety associated to buildings of type $\mathsf{E_6}$, i.e., the variety associated to Springer and Veldkamp' Hjlemslev-Moufang planes (in contrast, the restricted FTMS just contains the Segre variety of two projective planes, and the line Grassmannian of projective $5$-space). 

The paper is organized as follows. After introducing the preliminaries in Section~\ref{pre}, we investigate in Section \ref{Imbrex} \HJ geometries of symplectic rank 2. We show that, if the symplecta are thick generalized quadrangles, then the maximal singular subspaces contain a lot of non-closing O'Nan configurations. This immediately implies a fundamental lemma of Cohen \cite{Cohen}, which was originally proved only for the case of classical generalized quadrangles, and later generalized by Shult and K. Thas \cite{KoenErnie} for all Moufang quadrangles (in fact, their proof shows that only the ``strong transitivity'' property of Moufang quadrangles is needed). In our setting, there is no restriction on the generalized quadrangles, and we also provide examples of \HJ geometries of symplectic rank 2 with thick symplecta (whose maximal singular subspaces are not isomorphic to projective spaces!). In this way, Cohen's lemma for thick symplecta is turned into a positive result, rather than merely showing nonexistence. Also, our result reveals the true geometric reason why the thick case in Cohen's setting cannot exist: it is not because the quadrangle has remarkable transitivity properties or enjoys the structure of pseudo-quadratic forms, but it is because in projective spaces all O'Nan configurations close (projective spaces are in fact characterized by that property).  

In Section~\ref{rank3}, we classify all \HJ geometries of symplectic rank at least 3. This will be an application of impressive work of Cohen and Cooperstein \cite{Cohen,COCO}.

In Section \ref{Application}, we apply our results to the theory of \emph{Mazzocca-Melone sets}.  Roughly, a Mazzocca-Melone set is a set of points in a projective space satisfying a far reaching generalization of the conditions stated by Mazzocca \& Melone \cite{Maz-Mel:84} that originally characterized finite quadric Veronesean varieties. In  \cite{JSHVMBIG} we provided a characterization of the Severi varieties over an arbitrary field. These precisely correspond to the varieties of the second row of the split version of the restricted FTMS using Mazzocca-Melone sets.  We here propose to alter one of the axioms from a global condition to a local one, inspired by the characterization of all  Segre 
varieties $\mathcal{S}_{m,n}(\K)$ (see \cite{Tha-Mal:13}). 
The thus defined \emph{local Mazzocca-Melone sets} will be classified in Section~\ref{Application}, and will entail a characterization of the varieties in the second row of the split version of the extended FTMS . 

\section{Preliminaries}\label{pre}

In this section we briefly introduce polar and parapolar spaces thus fixing notation. More information about parapolar spaces can be found in \cite{Shu:12}. 

\subsection{Polar spaces}

Polar spaces are essentially the geometries associated with pseudo-quadratic forms, except that in the rank 2 case no classification is feasible since there exist free constructions. Polar spaces have been introduced by Veldkamp \cite{Veldkamp:59}, later on included in the theory of buildings by Tits \cite{Tits:74}, and around the same time the axioms have been simplified by Buekenhout \& Shult \cite{Bue-Shu:74}. It is the latter point of view we take here. 

Let $\Gamma=(\cP,\cL,*)$ be a point-line geometry ($\cP$ is the set of points, $\cL$ the set of lines, and $*$ a symmetric incidence relation). We will not consider geometries with repeated lines, so from now on we view $\cL$ as a subset of the power set of $\cP$, and $*$ is inclusion made symmetric. The \emph{incidence graph} is the bipartite graph on $\cP\cup\cL$ with $*$ as adjacency relation. The dual of $\Gamma$ is the point-line geometry $(\cL,\cP,*)$. A \emph{subspace} of $\Gamma$ is a subset $S$ of the point set such that, if two points $a,b$ belong to $S$, then all lines containing both $a$ and $b$ are contained in $S$. Points contained in a common line will be called \emph{collinear}, dually, lines sharing at least one point are called \emph{concurrent}. A \emph{singular} subspace is a subspace every two points of which are collinear.  Note that the empty set and a single point are legible singular subspaces. Now, $\Gamma$ is called a \emph{polar space of rank $r$} if the following conditions hold. 

\begin{itemize}\addtolength{\itemsep}{-0.4\baselineskip}
\item[(PS1)] Every line contains at least $3$ points.
\item[(PS2)] No point is collinear with all other points.
\item[(PS3)] Every nested sequence of singular subspaces has at most length $r+1$ and there exists such sequence of length $r+1$.
\item[(PS4)] For any point $x$ and any line $L$, either one or all points on $L$ are collinear with $x$. 
\end{itemize}

A \emph{generalized quadrangle} is a polar space of rank 2, the dual of a polar space of rank 2, or the point-line geometry defined by a $(2\times N)$-grid (with $N$ any cardinal number at least 2), or the dual of the latter. A generalized quadrangle is \emph{thick} if it is both a polar space and a dual polar space, equivalently, if every element is incident with at least 3 other elements. If $T\subseteq\cL$, then we denote by $T^\perp$ the set of lines concurrent with every member of $T$. A pair of non-concurrent lines $\{L,M\}$ of a generalized quadrangle is called \emph{regular} if $(\{L,M\}^\perp)^\perp=\{L',M'\}^\perp$, for some distinct $L',M'\in\{L,M\}^\perp$. If for a line $L$ every pair $\{L,M\}$ with $M$ non concurrent with $L$ is regular, then the line $L$ is called \emph{regular}.  A \emph{subquadrangle} of the generalized quadrangle $\Gamma=(\cP,\cL,*)$ is a subset of $\cP$, together with a subset of $\cL$ on which $*$ induces a generalized quadrangle. A subquadrangle $\Gamma'$ is \emph{ideal} if for every point $x$ in $\Gamma'$, every line of $\Gamma$ incident with $x$ also belongs to $\Gamma'$.

We note that polar spaces are partial linear spaces, i.e., two collinear points $x,y$ determine exactly one line, which we sometimes denote by $xy$.

\subsection{Parapolar spaces}

Parapolar spaces were introduced to capture the spherical buildings of exceptional type (\emph{spherical buildings} comprise projective spaces, polar spaces and the ones of exceptional type). Since we only need strong parapolar spaces of diameter 2, we only introduce these. 

Let again 
$\Gamma=(\mathcal{P},\mathcal{L},*)$ be a point-line geometry. A subspace $S$ is called \emph{convex} if for any pair of points $\{x,y\}\subseteq S$, every point contained in or incident with a line of any shortest path between $x$ and $y$ (in the incidence graph) is contained in $S$. Also, $\Gamma$ is called \emph{connected} if its incidence graph is connected.  Now, $\Gamma$ is called a {\em strong parapolar space of diameter $2$} if the following two assertions hold:
\begin{itemize}\addtolength{\itemsep}{-0.4\baselineskip}
\item[(PPS1)] $\mathcal{P},\mathcal{L}$ is a connected space such that for every point $x$ and every line $L$  either 0, or exactly one or all points of $L$ are collinear with $x$ and all possibilities occur.
\item[(PPS2)] For every pair of distinct non-collinear points $x$ and $y$ in $\mathcal{P}$, the smallest convex subspace containing $x$ and $y$ is a polar space of rank at least 2.
\end{itemize}

The convex subspaces of (PPS2) are called the \emph{symplecta}, or briefly, the \emph{symps}, of $\Gamma$. By Axiom (PPS2), any two non-collinear points $x,y\in\cP$ are contained in a unique symp and we denote it by $\xi(x,y)$.  

Usually, the third axiom of a parapolar space reads
\begin{itemize}
\item[(PPS3)] Every line is contained in at least one symplecton,
\end{itemize}

but this axiom is automatically satisfied in our case (we restricted to \emph{strong} and \emph{diameter}~2, see\cite{JSHVMBIG}). 


It follows from  Corollary 13.3.3 of \cite{Shu:12} that any symplecton is uniquely determined by any pair of noncollinear points contained in it. We will use this without further notice.
 From (PPS2) we immediately obtain the following two lemmas.
\begin{lemma}[The Quadrangle Lemma] \label{TQL}
Let $L_1,L_2,L_3,L_4$ be four (not necessarily pairwise distinct) singular lines such that $L_i$ and $L_{i+1}$ (where $L_{5}=L_{1}$) share a (not necessarily unique) point $p_{i}$, $i=1,2,3,4$, and suppose that $p_1$ and $p_3$ are not collinear. Then $L_1,L_2,L_3,L_4$ are contained in a unique common \EQ.
\end{lemma}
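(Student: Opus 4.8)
The plan is to prove that four singular lines $L_1,L_2,L_3,L_4$, consecutively sharing points $p_1,p_2,p_3,p_4$ (with $p_1,p_3$ non-collinear), lie in a common symplecton. The key observation is that $p_1$ and $p_3$ are non-collinear, so by Axiom (PPS2) they determine a unique symplecton $\xi(p_1,p_3)$, which is a polar space of rank at least 2. The strategy is to show that all four lines are forced into this symp.

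First I would establish that the relevant points all lie in $\xi := \xi(p_1,p_3)$. We have $p_1,p_3\in\xi$ by definition. Now consider $p_2$, which is collinear with both $p_1$ (via $L_2$... actually via $L_1$) and $p_3$. More precisely, $p_2$ lies on $L_2$ together with $p_1$ (shared point of $L_1,L_2$ is $p_1$; shared point of $L_2,L_3$ is $p_2$), so $p_2$ is collinear with $p_1$, and $p_2$ is collinear with $p_3$ via $L_3$. Since $\xi$ is a convex subspace and $p_2$ is collinear to both non-collinear points $p_1,p_3$ of $\xi$, I would argue that $p_2$ lies on a shortest path between $p_1$ and $p_3$ in the incidence graph (namely the path $p_1,L_2,p_2,L_3,p_3$ of length $4$, matching the graph-distance $4$ of two non-collinear points). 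Hence by convexity $p_2\in\xi$, and likewise $p_4\in\xi$. Since symplecta are subspaces, once the endpoints of each $L_i$ lie in $\xi$, the whole line $L_i$ is contained in $\xi$.

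The step requiring the most care is verifying that the path through $p_2$ (respectively $p_4$) is genuinely a shortest path, so that convexity applies. In a strong parapolar space of diameter $2$, two non-collinear points are at incidence-graph distance $4$, and any point collinear to both lies on a geodesic between them; I would confirm this using (PPS1), which guarantees that the local structure behaves well, together with the fact that $p_1$ and $p_3$ are non-collinear (so the path cannot be shortened to length $2$). Once all four lines lie in $\xi$, uniqueness is immediate: any symp containing $L_1$ and $L_3$ must contain the non-collinear pair $p_1,p_3$, and $\xi(p_1,p_3)$ is the unique such symp by (PPS2). The main obstacle I anticipate is the degenerate configurations hidden in the phrase ``not necessarily pairwise distinct'': I would need to check that even when some $L_i$ coincide or some $p_i$ collapse, the non-collinearity of $p_1,p_3$ still forces a genuine rank-$2$ symp and the argument goes through unchanged.
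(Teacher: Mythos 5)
Your proof is correct and is precisely the argument the paper leaves implicit: the paper asserts the lemma follows ``immediately'' from (PPS2), which means exactly the convexity argument you spell out --- since $p_1,p_3$ are non-collinear, both incidence-graph paths $p_1, L_2, p_2, L_3, p_3$ and $p_1, L_1, p_4, L_4, p_3$ have length $4$ and are therefore geodesics, so $\xi(p_1,p_3)$ absorbs all four lines, and uniqueness follows because any symp containing the configuration contains the non-collinear pair $p_1,p_3$. The degenerate cases you flag are harmless: every coincidence among the $L_i$ or $p_i$ that could disturb the argument (e.g.\ $L_2=L_3$, $L_1=L_3$, or $p_2\in\{p_1,p_3\}$) would force $p_1$ and $p_3$ to be collinear, contradicting the hypothesis, and the remaining coincidences leave both paths intact.
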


\begin{lemma}\label{subspace}
Let $p\in X$ and let $H$ be a symp not containing $p$. Then the set of points of $H$ collinear with $p$   constitutes a singular subspace of $H$.
\end{lemma}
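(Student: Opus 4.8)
The plan is to write $Y$ for the set of points of $H$ collinear with $p$, and to prove separately that $Y$ is a subspace of $H$ and that any two of its points are collinear; together these give that $Y$ is a singular subspace. Note that if $|Y|\le 1$ there is nothing to prove, since the empty set and a single point are legible singular subspaces, so I assume $Y$ has at least two points.

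First I would dispose of the subspace property. Take two collinear points $a,b\in Y$; since $H$ is a subspace, the line $ab$ lies in $H$. Applying (PPS1) to the point $p$ and the line $ab$, either $0$, exactly one, or all points of $ab$ are collinear with $p$. As both $a$ and $b$ are collinear with $p$, the first two alternatives are excluded, so every point of $ab$ is collinear with $p$, i.e.\ $ab\subseteq Y$. Since we work in a partial linear space, $ab$ is the only line through $a$ and $b$, and hence $Y$ is a subspace.

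The substance of the lemma is that $Y$ is singular, and here the idea is to invoke the Quadrangle Lemma (Lemma~\ref{TQL}). Suppose for contradiction that $a,b\in Y$ are distinct and non-collinear. As $a,b$ are non-collinear points both lying in the symp $H$, uniqueness of symplecta forces $\xi(a,b)=H$. Because $H$ is a polar space of rank at least $2$, there is a point $c\in H$ collinear with both $a$ and $b$: choosing a line $M$ of $H$ through $b$, axiom (PS4) yields a unique point $c\in M$ collinear with $a$, and $c$ is collinear with $b$ since both lie on $M$. Now consider the four singular lines $L_1=pa$, $L_2=ac$, $L_3=cb$, $L_4=bp$; consecutive lines meet (in $a$, $c$, $b$, $p$ respectively), and the two ``opposite'' intersection points $a$ and $b$ are non-collinear by assumption, so Lemma~\ref{TQL} places $L_1,L_2,L_3,L_4$ in a common symp $\Sigma$. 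This $\Sigma$ contains the non-collinear pair $a,b$, whence $\Sigma=\xi(a,b)=H$; but $\Sigma$ also contains $p\in L_1$, giving $p\in H$ and contradicting the hypothesis $p\notin H$.

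The main obstacle is precisely this singular part: the trick is to recognise that the four lines $pa,ac,cb,bp$ form exactly the configuration the Quadrangle Lemma requires. The only points needing care are that these are genuine (non-degenerate) lines and that the quadrangle is not collapsed, namely $p\neq a,b$ because $p\notin H$ while $a,b\in H$, and $c\neq a,b$ because $c$ is collinear with both $a$ and $b$ whereas $a$ and $b$ are not collinear with each other. Once this is checked, the contradiction $p\in H$ closes the argument, and combined with the subspace property we conclude that $Y$ is a singular subspace of $H$.
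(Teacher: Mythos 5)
Your proof is correct, but it takes a longer route than the paper intends. The paper gives no written proof at all: it states that this lemma, like the Quadrangle Lemma, follows \emph{immediately} from (PPS2), and the intended argument is pure convexity. Indeed, if $a,b\in Y$ were non-collinear, then $H=\xi(a,b)$ by uniqueness of symps, and $a,\,ap,\,p,\,pb,\,b$ is a shortest path between $a$ and $b$ in the incidence graph (it has length $4$, and non-collinearity of $a,b$ forces distance at least $4$); convexity of $H$ then yields $p\in H$ at once, a contradiction. Your argument reaches the same contradiction, but the auxiliary point $c$ and the appeal to Lemma~\ref{TQL} are a detour: the Quadrangle Lemma is itself just a repackaging of convexity, and $c$ plays no role in your final contradiction, which only uses that the symp $\Sigma$ produced by Lemma~\ref{TQL} contains the lines $pa$ and $pb$ and hence the point $p$. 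Your verification of the non-degeneracy of the quadrangle is careful but also unnecessary, since Lemma~\ref{TQL} explicitly allows repeated lines and non-unique intersection points. Your treatment of the subspace part via (PPS1) is exactly the intended one, and your reduction to the two separate properties (subspace plus pairwise collinearity) is the right decomposition; so what the direct convexity argument buys over yours is only brevity, not extra strength.
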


Moreover we will use the following 
\begin{lemma}\label{proper}
Let $x,y\in X$ be collinear. Then there is a \EQ\ containing $x$ and not containing~$y$. 
\end{lemma}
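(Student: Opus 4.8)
The plan is to exploit the single structural fact that drives everything: two distinct symps meet in a singular subspace. Indeed, if symps $H\neq H'$ shared two non-collinear points $p,q$, then both, being convex, would contain the smallest convex subspace on $p,q$, namely $\xi(p,q)$; but a symp is determined by any pair of non-collinear points it contains, so $H=\xi(p,q)=H'$, a contradiction. Hence $H\cap H'$ contains no two non-collinear points, i.e. it is singular. I would then argue by contradiction, assuming that every symp through $x$ contains $y$; since symps are subspaces and $x,y$ are collinear, this is the same as saying that every symp through $x$ contains the whole line $L=xy$.

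First I would fix one symp through $x$. As the diameter is $2$ there is a point $z$ not collinear with $x$, and I set $H=\xi(x,z)$; by the standing assumption $y\in H$, so $L\subseteq H$. Working inside the polar space $H$ (of rank at least $2$), I would pick a point $a$ collinear with $x$ but not with $y$; such a point exists by nondegeneracy of polar spaces, since for $x\neq y$ the set of points collinear with $x$ is not contained in that of $y$ (this also uses (PS2)). Because every point of $L$ other than $y$ is collinear with $y$, the choice $a\not\sim y$ forces $a\notin L$, so $xa$ is a genuine line through $x$ distinct from $L$, with $a\in H$.

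The decisive move is to produce a \emph{second} symp through the line $xa$. For this I would find a point $w$ collinear with $a$, not collinear with $x$, and lying outside $H$. Granting such a $w$: as $w$ is not collinear with $x$, the symp $H'=\xi(x,w)$ is defined and passes through $x$; as $a$ is a common neighbour of $x$ and $w$, Axiom (PPS2) places $a$ on a geodesic, hence in $H'$, so $H'\supseteq xa$; and $w\notin H$ gives $H'\neq H$. Now $H$ and $H'$ are two distinct symps through $x$, both containing $a$, and by the contradiction hypothesis both contain $y$. Thus $a,y\in H\cap H'$, which is singular, forcing $a$ to be collinear with $y$ and contradicting the choice of $a$. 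This contradiction establishes the lemma.

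The main obstacle is exactly the existence of $w$, that is, the production of the second symp. Here the non-degeneracy hidden in (PPS1) is essential: the clause that all three possibilities occur rules out $\Gamma$ being a single polar space, so $H\neq\Gamma$ and there are points outside $H$, and connectedness then supplies a point of $H$ having a neighbour outside $H$; Lemma~\ref{subspace} controls how such external points meet $H$ (their collinearity trace is singular), which is what I would use, after exploiting the freedom in the choice of $a$, to realise simultaneously the three requirements on $w$. The genuinely delicate case to dispose of is the degenerate one in which every point not collinear with $x$ already lies in $H$ (so that $H$ would be the \emph{only} symp through $x$); this is precisely the configuration excluded by the non-degeneracy of (PPS1), and checking that it cannot occur is where the real care is needed.
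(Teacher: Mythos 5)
Your closing mechanism is sound and is in fact the same one the paper uses: two distinct symps intersect in a singular subspace, so if two distinct symps through $x$ both contained $y$, their intersection could not contain a point collinear with $x$ but not with $y$. The genuine gap is the step you yourself flag as ``the main obstacle'': the existence of $w$ collinear with $a$, not collinear with $x$, and lying outside $H$. This is not a deferred technicality --- it is essentially the content of the lemma, since such a $w$ is exactly what produces a second symp through $x$ containing your chosen $a$. The ingredients you list do not deliver it. Connectedness gives a neighbour $v$ outside $H$ of \emph{some} point $b\in H$, but nothing makes $b$ an admissible choice of $a$ (it may be collinear with $y$, or not collinear with $x$), nothing prevents $v$ from being collinear with $x$, and Lemma~\ref{subspace} only says that the trace of $v$ on $H$ is singular; it gives no way to steer an external point towards a prescribed $a$. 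Moreover, your assertion that the degenerate configuration ``every point not collinear with $x$ already lies in $H$'' is ``precisely the configuration excluded by (PPS1)'' is unjustified: the clause that all three possibilities occur is \emph{global}, possibly witnessed by point--line pairs having nothing to do with $x$, so it does not rule out this configuration at the particular point $x$. Finally, even in the favourable situation where some external point $v$ is not collinear with $x$, you only obtain a second symp $\xi(x,v)$ through $x$, not one through your $a$; under your contradiction hypothesis every point of $H\cap\xi(x,v)$ is collinear with $y$, so that pair of symps cannot be fed into your final argument.

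The paper closes exactly this hole by a different construction. It first secures two distinct symps $H,H'$ through $x$ (using that (PPS1) forces $\Gamma$ to contain at least two symps, together with Lemma~\ref{TQL} and Lemma~\ref{subspace}). Assuming both contain $y$, it picks $p\in H$ collinear with $x$ but not with $y$ (nondegeneracy of the polar space $H$; such a $p$ lies outside $H'$ automatically, because $H\cap H'$ is singular and contains $y$), and then a point $p'\in H'\setminus H$ collinear with $x$ but not with $p$ --- crucially, $p'$ is found \emph{inside the known symp} $H'$, where Lemma~\ref{subspace} (applied to $p\notin H'$) and the singularity of $H\cap H'$ give real control. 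The symp $\xi(p,p')$ then contains $x$ by convexity and cannot contain $y$, since otherwise $H\cap\xi(p,p')$ would contain the non-collinear points $p$ and $y$. If you wish to keep your structure, you must either actually prove the existence of $w$ (which will in effect force you through an argument of this kind) or reorganize the proof so that the auxiliary point is located inside an already-existing second symp rather than postulated outside all of them.
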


\begin{proof}
Since $\Gamma$ contains at least two symps by (PPS1), it is easy to see, using the previous lemmas, that there are at least two different \EQ s $H,H'$ containing $x$. 
Suppose both contain $y$. Select a point $p\in H\setminus H'$ collinear to $x$ but not collinear to $y$, and a point $p'\in H'\setminus H$ collinear to $x$ but not collinear to $p$ (these points exist by Lemma~\ref{subspace} and since $H\cap H'$ is a singular subspace). By convexity, $\xi(p,p')$ contains $x$, but it does not contain $y$ because the intersection $H\cap H''$ would otherwise not be a singular subspace, as it would contain the non-collinear points $p$ and $y$.
\end{proof}

\subsection{Imbrex geometries}

We will require an additional axiom, which basically says that maximal singular subspaces exist.

\begin{itemize}
\item[(PPS4)] Every nested sequence of singular subspaces has finite length.
\end{itemize}

We call a strong parapolar space of rank 2 an \emph{imbrex geometry} if it additionally satisfies (PPS4) and the axiom (Imb) stated in the introduction. 
We now show that all symps have the same rank. This common rank will then be called the \emph{symplectic rank} of the imbrex geometry.

\begin{lemma}
In an imbrex geometry, all symps have the same rank.
\end{lemma}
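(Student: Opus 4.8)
The plan is to reduce the statement to a local comparison of ranks and then globalize by a connectivity argument. The single structural fact I would rely on is that every maximal singular subspace of a polar space of rank $r$ is a projective space of dimension $r-1$ — a standard property of polar spaces, compatible with the length condition in (PS3). Consequently, if two symps $H$ and $H'$ meet in a singular subspace $M$ that is maximal in each of them, then $\dim M = \mathrm{rank}(H)-1 = \mathrm{rank}(H')-1$, so $\mathrm{rank}(H)=\mathrm{rank}(H')$. Call two such symps \emph{adjacent}. The whole problem thus becomes: show that any two symps can be joined by a chain of pairwise adjacent symps.

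The role of (Imb) is precisely to manufacture adjacencies. Suppose $x$ is a point and $y_1,y_2$ are collinear points, both non-collinear with $x$, lying respectively on two given symps through $x$. Since $y_1,y_2$ are two distinct points of the line $L=y_1y_2$ that are non-collinear with $x$, the trichotomy in (PPS1) rules out the cases ``exactly one'' and ``all'', so $x$ is collinear with \emph{no} point of $L$. Then (Imb) applies and tells us that $\xi(x,y_1)$ and $\xi(x,y_2)$ meet in a common maximal singular subspace; as $\xi(x,y_i)$ is the unique symp on $x$ and $y_i$, these are exactly the two prescribed symps, and they are adjacent. (That configurations satisfying the hypothesis of (Imb) occur at all is guaranteed by the clause ``all possibilities occur'' in (PPS1).)

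Next I would fix a point $x$ and show that all symps through $x$ have the same rank. Given distinct symps $H_1,H_2\ni x$, the intersection $H_1\cap H_2$ is a singular subspace, since two non-collinear points in it would force $H_1=H_2$ by uniqueness of the symp they span. The goal is to find collinear points $y_1\in H_1\setminus x^\perp$ and $y_2\in H_2\setminus x^\perp$: for a chosen $y_2\in H_2\setminus x^\perp$, Lemma~\ref{subspace} guarantees that the footprint $y_2^\perp\cap H_1$ is a singular subspace of $H_1$, and a point of it lying outside $x^\perp$ furnishes $y_1$, whence the previous paragraph makes $H_1$ and $H_2$ adjacent. When $H_1\cap H_2$ is too small for this to succeed directly (in the extreme case $H_1\cap H_2=\{x\}$ no such collinear pair need exist, as simple examples in line Grassmannians show), I would instead interpolate a third symp $H_3\ni x$ chosen to meet each of $H_1,H_2$ in a large enough singular subspace, and connect $H_1$ to $H_2$ through $H_3$; iterating yields a chain of adjacent symps on $x$.

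Finally I would globalize. Since $\Gamma$ is connected and every line lies in a symp, the graph whose vertices are the symps, with two symps joined when they share a point, is connected: a path of collinear points from a point of $H$ to a point of $H'$ lifts, line by line, to such a chain of symps. Combined with the local statement of the third paragraph this shows that all symps have equal rank. The main obstacle is exactly that local connectivity: guaranteeing enough (Imb)-adjacencies among the symps on a fixed point to connect any two of them, i.e.\ controlling the interpolating symp $H_3$ when the intersection is small. This is where one genuinely uses the interplay of (Imb), Lemma~\ref{subspace} and the trichotomy in (PPS1), and it is the step I expect to require the most care.
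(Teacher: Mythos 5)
Your skeleton matches the paper's (reduce, via connectivity of the graph of symps, to two symps $H,H'$ sharing a point $x$; then use (Imb) to force a common maximal singular subspace, hence equal rank), but there are two problems, one of which is fatal.

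First, a genuine error of logic in your second paragraph: from the fact that $y_1,y_2\in L$ are both non-collinear with $x$, the trichotomy of (PPS1) only excludes ``all''; it does \emph{not} exclude ``exactly one'', since the unique point of $L$ collinear with $x$ could be a third point of $L$, distinct from $y_1$ and $y_2$. What actually excludes this case is convexity: if $z\in L$ were collinear with $x$, then $z$ lies on a geodesic from $x$ to $y_1$, so $z\in\xi(x,y_1)=H_1$, whence $L\subseteq H_1$ and $y_2\in H_1$, forcing $H_1=\xi(x,y_2)=H_2$. So the correct dichotomy is ``either $H_1=H_2$ or (Imb) applies'', and it requires this argument (which is exactly how the paper argues), not (PPS1).

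Second, and decisively, the entire substance of the lemma sits in the step you defer. When no collinear pair $y_1\in H$, $y_2\in H'$, both non-collinear with $x$, exists, you say you ``would interpolate a third symp $H_3\ni x$'' but give no construction, and you yourself flag this as the step requiring the most care. The paper's proof consists precisely of this construction: take $y\in H$, $y'\in H'$ non-collinear with $x$ and with each other, and work inside the symp $\xi(y,y')$ --- which does \emph{not} contain $x$ and which meets $H$ and $H'$ in singular subspaces $S$ and $S'$ (Lemma~\ref{subspace}). If $\xi(y,y')$ has rank at least $3$ or is a thick quadrangle, one finds a point $z$ collinear with both $y$ and $y'$ but outside $S\cup S'$, and then $\xi(x,z)$ is the interpolating symp, linked to $H$ and to $H'$ through the collinear pairs $(y,z)$ and $(z,y')$. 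If $\xi(y,y')$ is a grid and no such $z$ exists, then $S$ and $S'$ are necessarily disjoint lines of the grid, and since every line has at least three points one finds a collinear pair $u\in S$, $u'\in S'$ with neither point collinear to $x$, to which (Imb) applies directly --- note that in this case no interpolating symp is used at all, so your proposed scheme would not even be the right one there. Without this case analysis your argument does not close; as it stands it is a correct reduction plus an unproved main step.
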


\begin{proof}
Let $H,H'$ be two symps. We first note that (PPS2) implies that the graph of all symps, adjacent when they meet in a nonempty subspace, is connected. Hence we may assume that $H\cap H'$ contains some point $x$. Let $y\in H$ and $y'\in H'$ be two points, both not collinear to $x$, and hence $y'\notin H$, $y\notin H'$. If $y$ and $y'$ are collinear, then suppose that the line $yy'$ contains a point $z$ collinear to $x$. It follows that the symp $\xi(x,y)=H$ contains $z$ and hence $y'$, a contradiction. Hence by (Imb), the symps $\xi(x,y)=H$ and $\xi(x,y')=H'$ have the same rank. So we may assume that $y$ and $y'$ are not collinear.  The symp $\xi(y,y')$ shares a singular subspace $S$ with $H$ and a singular subspace $S'$ with $H'$. We distinguish two possibilities.
\begin{enumerate}
\item Suppose the rank of $\xi(y,y')$ is at least $3$, or $\xi(y,y')$ is a thick generalized quadrangle. Then it is easy to see that we can find a point $z$ not in $S\cup S'$, and so not in $H\cup H'$, but collinear to both $y,y'$. As before, the lines $yz$ and $y'z$ do not contain any point collinear to $x$. Hence the symps $\xi(x,y)=H$ and $\xi(x,z)$ have the same rank, as well as $\xi(x,y')=H'$ and $\xi(x,z)$.
\item Suppose $\xi(y,y')$ is a non-thick generalized quadrangle, i.e., a grid.   If we cannot find a point $z$ as in the previous case, then the only possibility is that $S$ and $S'$ are disjoint lines of $\xi(y,y')$. Since both $S$ and $S'$ contain at least three points, we can find collinear points $u\in S$ and $u'\in S'$ such that $x$ is not collinear to either $u$ or $u'$. By (Imb), the symps $\xi(x,u)=H$ and $\xi(x,u')=H'$ intersect in a maximal singular subspace of both, implying $H$ and $H'$ have the same rank. 
\end{enumerate}     
\end{proof}



\section{Imbrex geometries of symplectic rank 2} \label{Imbrex}
Throughout this section, we let $\Gamma=(\cP,\cL,*)$ be an imbrex geometry of symplectic rank 2. Our main aim is to show that every maximal singular subspace of $\Gamma$ contains at least one \emph{non-closed O'Nan configuration}, i.e., four distinct lines pairwise intersecting in a point, except for one pair, which is disjoint. 

\begin{lemma}\label{pointcol}
Let $x,q_{1},q_{2}\in\cP$ with $q_1$ collinear to $q_2$. Suppose no point of the line $q_1q_2$ is collinear to 
$x$. Then some point of the line $L:=\xi(x,q_{1})\cap \xi(x,q_{1})$ is collinear with all points on $q_1q_2$. 
\end{lemma}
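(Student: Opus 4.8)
The plan is to identify the desired point as the unique point of $L:=\xi(x,q_1)\cap\xi(x,q_2)$ collinear with $q_1$, and to show it is also the unique point of $L$ collinear with $q_2$. Write $H_1:=\xi(x,q_1)$ and $H_2:=\xi(x,q_2)$. First I would record the basic configuration. Since no point of $q_1q_2$ is collinear with $x$, in particular $q_1$ and $q_2$ are both non-collinear with $x$, so $H_1,H_2$ are genuine symps, each containing $x$; hence $x\in H_1\cap H_2$. These symps are distinct: were $H_1=H_2$, then $q_1,q_2,x$ would all lie in one symp, the line $q_1q_2$ would lie in it, and (PS4) applied to $x$ and $q_1q_2$ inside this rank-$2$ polar space would force some point of $q_1q_2$ to be collinear with $x$, a contradiction. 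By (Imb), $L=H_1\cap H_2$ is a singular subspace maximal in both, and since the symplectic rank is $2$ this means $L$ is a line, with $x\in L$.

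Next, working inside the generalized quadrangle $H_1$, the point $q_1$ (which lies off $L$, else $q_1\in H_2$ would give $H_1=\xi(x,q_1)=H_2$) is collinear with a unique point $p_1\in L$; symmetrically, inside $H_2$ the point $q_2$ is collinear with a unique point $p_2\in L$. As $x\in L$ but $x$ is collinear with neither $q_1$ nor $q_2$, we get $p_1\neq x\neq p_2$. The heart of the proof is the claim $p_1=p_2$. Once this is established, the point $p:=p_1=p_2$ is collinear with both $q_1$ and $q_2$, hence with two distinct points of the line $q_1q_2$; by (PPS1) it must then be collinear with every point of $q_1q_2$, which is exactly the conclusion sought.

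To prove $p_1=p_2$ I would argue by contradiction using the Quadrangle Lemma (Lemma~\ref{TQL}). Suppose $p_1\neq p_2$. Then $q_1$ is not collinear with $p_2$, since $p_1$ is the unique point of $L$ collinear with $q_1$. Now consider the four lines $A:=q_1p_1$, $M:=q_1q_2$, $B:=q_2p_2$ and $L$, which form a quadrilateral with consecutive shared points $q_1\in A\cap M$, $q_2\in M\cap B$, $p_2\in B\cap L$ and $p_1\in L\cap A$. The opposite vertices $q_1$ and $p_2$ are non-collinear, so Lemma~\ref{TQL} produces a symp $H$ containing all four lines. But then $H$ contains the non-collinear pair $q_1,x$ (with $q_1\in A$ and $x\in L$), forcing $H=\xi(x,q_1)=H_1$; likewise $H$ contains $q_2,x$, forcing $H=\xi(x,q_2)=H_2$. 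This yields $H_1=H_2$, contradicting the first paragraph, so $p_1=p_2$.

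The main obstacle, and the step I would treat most carefully, is the clean application of Lemma~\ref{TQL}: one must check that $A,M,B,L$ really are (the relevant) distinct lines and that the designated opposite vertices $q_1,p_2$ are non-collinear. All of this rests on the single bookkeeping fact that every point of $L$ is collinear with $x$ whereas no point of $q_1q_2$ is, which simultaneously keeps $q_1,q_2$ off $L$, keeps $x$ off $q_1q_2$, and — via the uniqueness of $p_1$ under the hypothesis $p_1\neq p_2$ — supplies the required non-collinearity. Everything after the equality $p_1=p_2$ is then immediate from (PPS1).
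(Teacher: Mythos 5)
Your proof is correct and follows essentially the same route as the paper's: both identify the unique point of $L$ collinear with $q_1$ and the unique point collinear with $q_2$, show they coincide via the Quadrangle Lemma (Lemma~\ref{TQL}), and conclude with (PPS1). Your write-up merely makes explicit some details the paper leaves implicit (distinctness of the two symps and the non-collinearity check needed to invoke Lemma~\ref{TQL}).
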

\begin{proof}
Note that $L$ is indeed a line by (Imb). Now, in the generalized quadrangle $\xi(x,y_i)$ there is a unique point $z_i$ on $L$ collinear to $y_i$, $i=1,2$. If $z_1\neq z_2$, then Lemma \ref{TQL} yields $\xi(x,q_{1})=\xi(x,q_{2})$, implying that $x$ is collinear with a point of $L$, a contradiction. Hence $z_1=z_2\in L$ is collinear to both $y_1$ and $y_2$. Axiom (PPS1) completes the proof.
\end{proof}

We define a new point-line geometry $\Delta=(\mathcal{P},\mathcal{B},\star)$,  where we call the elements of $\cB$ \emph{blocks} to avoid confusion with the lines of $\Gamma$, where $\cB$ is the family of  maximal singular subspaces of $\Gamma$, and where $\star$ is containment made symmetric. We will need the following auxiliary results.

\begin{lemma}\label{far}
If $\Gamma$ contains at least two distinct symps, then it enjoys the following two properties.
\begin{itemize}\addtolength{\itemsep}{-0.4\baselineskip}
\item[$(i)$] For each point $r$, there exists a line $L_{r}$ no point of which is collinear to $r$.
\item[$(ii)$] For each line $M$, there exist a point $y_{M}$ not collinear with any point of $M$. 
\end{itemize}
\end{lemma}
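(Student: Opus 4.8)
The plan is to prove both parts by the same device: use Lemma~\ref{proper} to pass to a conveniently chosen symp, and then read off the desired ``far'' object from the generalized-quadrangle geometry of that symp, controlling perps of external points by Lemma~\ref{subspace}.

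For $(i)$, choose a point $s$ collinear with $r$ (which exists since $r$ lies on a line) and apply Lemma~\ref{proper} to the collinear pair $s,r$ to obtain a symp $H'$ with $s\in H'$ and $r\notin H'$. By Lemma~\ref{subspace} the set $r^{\perp}\cap H'$ of points of $H'$ collinear with $r$ is a singular subspace of the generalized quadrangle $H'$, hence empty, a point, or a line. In each case a short quadrangle argument yields a line $N$ of $H'$ disjoint from $r^{\perp}\cap H'$: an arbitrary line if it is empty, a line avoiding the point if it is a point, and a line opposite to it if it is a line (two opposite lines always exist, by non-degeneracy). Since $N\subseteq H'$, any point of $N$ collinear with $r$ would belong to $r^{\perp}\cap H'$; thus no point of $N$ is collinear with $r$, and $N$ serves as $L_{r}$.

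For $(ii)$, fix a symp $H\supseteq M$ (it exists by (PPS3)) and a point $p\in H\setminus M$; in the generalized quadrangle $H$ there is a unique point $m_{p}\in M$ collinear with $p$. Applying Lemma~\ref{proper} to the collinear pair $p,m_{p}$ gives a symp $H'\ni p$ with $m_{p}\notin H'$, so $H'\neq H$. The crucial observation is that, by Lemma~\ref{subspace}, for any point $y\notin H$ the trace $y^{\perp}\cap H$ is a singular subspace of $H$; if this trace contains $p$, it cannot contain any point of $M$ other than $m_{p}$, since such a point would be collinear with $p$ and $p$ is collinear with no point of $M$ besides $m_{p}$. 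Hence any $y$ with $y\notin H$, $y$ collinear with $p$, and $y$ not collinear with $m_{p}$ is automatically non-collinear with every point of $M$, and may be taken as $y_{M}$. It therefore suffices to exhibit a line $N'$ of $H'$ through $p$ that is neither contained in $H$ (so that its further points lie outside $H$) nor contained in $m_{p}^{\perp}$ (so that its further points miss $m_{p}$).

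Producing $N'$ is routine: one must only avoid the line $H\cap H'$, if it is a line, and the singular subspace $m_{p}^{\perp}\cap H'$, if it is a line through $p$, so at least one admissible line through $p$ remains as soon as $p$ lies on three or more lines of $H'$. I expect the main obstacle to be exactly the non-thick case, where $H'$ is a grid and the two lines of $H'$ through $p$ are precisely the two to be avoided. This delicate point I plan to settle either by arranging $H\cap H'$ to be the single point $p$, which eliminates one obstruction, or, failing that, by replacing $N'$ with the line of $H'$ parallel to $m_{p}^{\perp}\cap H'$ and checking directly—applying Lemma~\ref{TQL} to the resulting quadrilateral—that none of its points can be collinear with a point of $M$. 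This grid bookkeeping is the one place where genuine care is needed; everything else follows at once from Lemmas~\ref{proper} and~\ref{subspace}.
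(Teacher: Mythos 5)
Part $(i)$ of your proof is correct, and it takes a somewhat different route from the paper's: the paper fixes a symp $H$ through $r$, a point $y\notin H$ and a point $z\in H$ far from both, and plays two lines of $\xi(y,z)$ through $y$ against each other, whereas you place $r$ outside a symp $H'$ via Lemma~\ref{proper} and take a line of $H'$ disjoint from the singular trace $r^\perp\cap H'$ (Lemma~\ref{subspace}). All three cases (trace empty, a point, a line) can indeed be completed, also when $H'$ is a grid, so this half is sound.

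Part $(ii)$, however, has a genuine gap, exactly at the grid case you flagged, and neither of your two proposed repairs closes it. The decisive point is that Lemma~\ref{proper} gives you no control over $H\cap H'$, and when $H\cap H'$ is a line your entire search space $H'$ may contain no admissible point $y_M$ at all. Concretely, consider the Segre geometry $\mathcal{S}_{p,q}(\K)$ with $p,q\geq 2$: points are the pairs $(a,b)\in\PG(p,\K)\times\PG(q,\K)$, lines are the sets $\{a\}\times\ell'$ and $\ell\times\{b\}$, and the symps are the grids $\ell\times\ell'$; by Section~\ref{Application} this is an imbrex geometry of symplectic rank $2$, hence within the scope of the lemma. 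Take $M=\{a_0\}\times\ell_0$, the symp $H=\ell_1\times\ell_0$ with $a_0\in\ell_1$, the point $p=(a_1,b_0)$ with $a_1\in\ell_1\setminus\{a_0\}$, so that $m_p=(a_0,b_0)$, and let $H'=\ell\times\ell_0$ where $\ell$ is a line through $a_1$ missing $a_0$. Then $p\in H'$ and $m_p\notin H'$, so $H'$ is a perfectly admissible output of Lemma~\ref{proper}; the two lines of the grid $H'$ through $p$ are precisely $H\cap H'=\{a_1\}\times\ell_0$ and $m_p^\perp\cap H'=\ell\times\{b_0\}$ (your bad case); and \emph{every} point $(a,b)$ of $H'$ is collinear with the point $(a_0,b)$ of $M$. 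Thus your second repair is simply false: all points of the line of $H'$ parallel to $m_p^\perp\cap H'$ are collinear with points of $M$, and no application of Lemma~\ref{TQL} can yield a contradiction, because the configuration genuinely occurs --- the sought point $y_M$ exists (for instance $(a_1,b)$ with $b\notin\ell_0$) but lies outside $H'$. Your first repair, choosing $H'$ with $H\cap H'=\{p\}$, would indeed make the trace argument work, but such a symp is not supplied by Lemma~\ref{proper}, and proving its existence is a statement of essentially the same depth as the lemma itself, which you do not address. The paper avoids this entirely by arguing globally: if every point were collinear with some point of $M$, then for a fixed $p\in M$ every symp $H$ through $p$ would contain $M$ (any $x\in H$ not collinear with $p$ is collinear with some $m\in M$, whence $m\in\xi(x,p)=H$ by convexity, and then $M\subseteq H$ since $H$ is a subspace containing two points of $M$), contradicting Lemma~\ref{proper}. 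You should replace your local argument for $(ii)$ by this, or by something equally global.
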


\begin{proof}
For $(i)$ consider a symp $H$ through $r$, a point $y$ not in $H$ and a point $z$ in $H$ not collinear with either $r$ or $y$ (it is easy to check that this is always possible). Consider two different lines $L_{1},L_{2}$ in $\xi(y,z)$ through $y$. If $r$ is collinear with points $p_{1},p_{2}$ on $L_{1},L_{2}$, respectively, then $r\in \xi(p_1,p_2)=\xi(y,z)$, implying $H=\xi(r,z)=\xi(y,z)$ contains $y$, a contradiction. Hence at least one of $L_{1}$
or $L_{2}$ will do.

For $(ii)$, we argue by contradiction. Consider a point $p$ on $M$. Consider a symp $H$ through $p$ and a point
$x\in H$ not collinear to $p$.  
By assumption there exists a point on $M$ collinear with $x$. Hence
$M\subseteq\xi(x,p)=H$. As $H$ was arbitrary this contradicts Lemma \ref{proper}.
\end{proof}

 Lemma \ref{pointcol}  and Lemma \ref{far}$(ii)$ imply the following result.
\begin{cor}\label{lineblock}
Every line of $\Gamma$ is properly contained in a block of $\Delta$.
\end{cor}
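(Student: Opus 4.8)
The plan is to feed the two cited results into one another and then do a short incidence check. Fix a line $M$ of $\Gamma$. Since $\Gamma$ contains at least two distinct symps (a consequence of (PPS1), as already used in the proof of Lemma~\ref{proper}), Lemma~\ref{far}$(ii)$ furnishes a point $x:=y_M$ collinear with no point of $M$. Pick two distinct points $q_1,q_2\in M$, so that $q_1q_2=M$. Because no point of $M$ is collinear to $x$, the hypotheses of Lemma~\ref{pointcol} are met, and that lemma (which already guarantees that the intersection is a genuine line via (Imb) and symplectic rank $2$) produces a point $z$ on the line $L:=\xi(x,q_1)\cap\xi(x,q_2)$ collinear with every point of $M$.

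The remaining task is to verify that $z\notin M$; once this is known, $M\cup\{z\}$ is a set of pairwise collinear points strictly larger than $M$, and a block properly containing $M$ drops out. To establish $z\notin M$, I would observe first that $x$ itself lies on $L$, since $x$ belongs to both symps $\xi(x,q_1)$ and $\xi(x,q_2)$. Next, $z\neq x$, because $z$ is collinear with $q_1\in M$ while $x$ is not. Hence $z$ and $x$ are two distinct points of the line $L$ and are therefore collinear. If $z$ were a point of $M$, this collinearity would exhibit a point of $M$ collinear with $x$, contradicting the choice of $x$. Thus $z\notin M$.

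Finally I would pass from the clique $M\cup\{z\}$ to an actual block of $\Delta$. Here the only genuinely delicate point is that one must not naively ``generate a subspace''; instead one invokes the standing consequence of (PPS4) recorded in the introduction, namely that every set of mutually collinear points is contained in a maximal singular subspace. Applying this to $M\cup\{z\}$ yields a block $B$ with $M\cup\{z\}\subseteq B$, and since $z\notin M$ the containment $M\subsetneq B$ is proper, completing the argument. I expect the verification that $z\notin M$ (together with the correct bookkeeping that $x\in L$) to be the subtle step, the rest being a direct concatenation of Lemmas~\ref{far}$(ii)$ and~\ref{pointcol} with the maximality guaranteed by (PPS4).
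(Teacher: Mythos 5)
Your first half is correct and follows the same route as the paper: you combine Lemma~\ref{far}$(ii)$ with Lemma~\ref{pointcol} to obtain a point $z$ on $L=\xi(x,q_1)\cap\xi(x,q_2)$ collinear with all of $M$, exactly as the paper does. Your explicit verification that $z\notin M$ (via $x\in L$, then $z\neq x$ because $z$ is collinear with $q_1$ while $x$ is not, then collinearity of $z$ and $x$ along $L$) is sound, and in fact makes explicit a point the paper's proof leaves implicit; this part is fine.

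The genuine gap is your final step. You pass from the clique $M\cup\{z\}$ to a block by invoking ``the standing consequence of (PPS4) recorded in the introduction,'' namely that every set of mutually collinear points lies in a maximal singular subspace. But the introduction merely \emph{announces} this property; it is proved nowhere prior to Corollary~\ref{lineblock} --- indeed, proving it is precisely the bulk of the paper's own proof of this corollary, so your citation is circular relative to the paper's logical structure. Moreover, the fact is not a consequence of (PPS4) alone, which is why the appeal cannot be waved through: one must first show that the clique $M\cup\{z\}$ is contained in \emph{some} singular subspace at all (a clique in a general point-line geometry need not be), which the paper does by the inductive construction $S_0=M\cup\{z\}$, $S_{i+1}=S_i$ together with all lines on two points of $S_i$, verifying singularity of the union $S$ at each stage using the ``one or all'' alternative of (PPS1). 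Only then does (PPS4) enter, via a maximal nested sequence of singular subspaces above $S$, whose finiteness yields a last member $S^\infty$; the paper additionally shows, using the Quadrangle Lemma~\ref{TQL} and symplectic rank $2$, that any two singular subspaces containing $S$ have a join, so that $S^\infty$ actually contains every member of the family (this stronger, unique-block statement is reused later, e.g.\ in Lemma~\ref{moreprop}). To repair your proof you must supply the (PPS1)-based generation argument and the (PPS4) chain argument; as written, the attempt assumes the very statement whose proof constitutes the substance of the corollary.
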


\begin{proof}
Let $M\in\cL$ and let $p_M$ be a point not collinear to any point of $M$ (cf.~Lemma~\ref{far}$(ii)$). Then Lemma~\ref{pointcol} implies that there is a point $z$ collinear to all points of $M$. Let $S_0=M\cup\{z\}$. Define inductively $S_{i+1}$ as $S_i$ union all the lines containing at least two point of $S_{i}$. Let $S$ be the infinite union of all $S_i$, $i\in\mathbb{N}\cup\{0\}$. We claim that $S$ is a singular subspace. Indeed, by construction it is a subspace. Clearly, the claim follows if we show that all points of each $S_i$, $i\in\N\cup\{0\}$, are collinear to one another.   Since this is clearly the case for $S_0$, we may argue by induction and assume that all points of $S_{i-1}$ are collinear, $i\in\N$. Let $x,y\in S_i$. We may assume $y\notin S_{i-1}$. If $x\in S_{i-1}$, then let $z_1,z_2\in S_{i-1}$ with $y\in z_1z_2$. Since $x$ is collinear with both $z_1$ and $z_2$, Axiom (PPS1) implies that $x$ is collinear to $y$. If $x\notin S_{i-1}$, then let $u_1,u_2\in S_{i-1}$ be such that $x\in u_1u_2$. By the previous argument both $u_1$ and $u_2$ are collinear to $y$, hence again by (PPS1) the assertion follows.

So $S$ is a singular subspace properly containing $M$. More generally, the above proof can be used to show that every set of mutually collinear points is contained in a smallest singular subspace. Now let $\cF$ be the family of singular subspaces of $\Gamma$ containing $S$. We claim that every pair of elements of $\cF$ has a least upper bound (join) in the poset defined by $\cF$. Indeed, let $S_1,S_2\in\cF$ and suppose that some point $y_1$ of $S_1$ is not collinear to some point $y_2$ of $S_2$. We pick two points $x_1,x_2\in S$. Then Lemma~\ref{TQL} yields a symp containing $x_1,x_2,y_1,y_2$, contradicting the fact that the symplectic rank is 2. 

Now let $S^\infty$ be the last member of a maximal nested sequence contained in $\cF$ (which is finite and exists by (PPS4)). Then every member of $\cF$ is contained in $S^\infty$ by the previous paragraph and the maximality of the sequence. The corollary is proved.
\end{proof}

\begin{lemma}\label{moreprop}
\begin{itemize}\addtolength{\itemsep}{-0.4\baselineskip}
\item[$(i)$] Two blocks intersect at most in one point.
\item[$(ii)$] A point $p$ not contained in a block $B$ is collinear with at least one point $r\in B$ 
\end{itemize}
\end{lemma}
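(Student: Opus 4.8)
Both assertions will be proved by contradiction, and both rest on the Quadrangle Lemma (Lemma~\ref{TQL}) together with the elementary fact that in symplectic rank $2$ every \EQ\ is a generalized quadrangle, hence contains no triangle (three pairwise collinear points not on a common line). Part $(ii)$ will moreover use part $(i)$, so I would establish $(i)$ first.

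For $(i)$, suppose two distinct blocks $B_1,B_2$ share two points $a,b$, so that the line $L=ab$ lies in $B_1\cap B_2$. As $B_1,B_2$ are distinct maximal singular subspaces, neither contains the other, so I may pick $c\in B_1\setminus B_2$. The key preliminary observation is that $c$ cannot be collinear with every point of $B_2$: otherwise $\{c\}\cup B_2$ would be a set of mutually collinear points, hence (by the remark in the proof of Corollary~\ref{lineblock}) contained in a singular subspace properly larger than $B_2$, contradicting maximality. Thus there is $d\in B_2$ not collinear with $c$; since $c$ is collinear with all of $B_1\supseteq\{a,b\}$, necessarily $d\neq a,b$. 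Now $a,b,c$ are pairwise collinear inside $B_1$ and $a,b,d$ are pairwise collinear inside $B_2$, so the four lines $bc,ca,ad,db$ form a closed quadrilateral whose diagonal pair is the non-collinear pair $\{c,d\}$. Applying Lemma~\ref{TQL} places all four lines, and with them the triangle $a,b,c$, inside the single \EQ\ $\xi(c,d)$. Since $c\notin B_2\supseteq L$, the point $c$ does not lie on $L=ab$, so $a,b,c$ is a genuine triangle in a generalized quadrangle --- the desired contradiction.

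For $(ii)$, assume a point $p\notin B$ is collinear with no point of $B$. Fix distinct $q_1,q_2\in B$; as $p$ is collinear with no point of the line $q_1q_2\subseteq B$, Lemma~\ref{pointcol} produces a point $z\in\xi(p,q_1)\cap\xi(p,q_2)$ collinear with all of $q_1q_2$, in particular with $q_1$ and $q_2$. The decisive step is to locate $z$ inside $B$: the points $z,q_1,q_2$ are pairwise collinear, hence lie in some block $B'$, and since $B'$ and $B$ share the two points $q_1,q_2$, part $(i)$ forces $B'=B$, so $z\in B$. A short check gives $z\neq q_1$ (otherwise $q_1\in\xi(p,q_2)$ would force $\xi(p,q_1)=\xi(p,q_2)$, contradicting that their intersection is a line). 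Then the line $q_1z$ lies in $B$ (as $q_1,z\in B$) and also in the \EQ\ $\xi(p,q_1)$ (by convexity). In this generalized quadrangle $p$ is collinear with neither $q_1$ nor $z$, both of which lie in $B$, so $p$ is off the line $q_1z$; hence by axiom (PS4) the point $p$ is collinear with exactly one point $w$ of $q_1z$. But $w\in q_1z\subseteq B$, so $p$ is collinear with a point of $B$ --- a contradiction.

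I expect the main obstacle to be the coordination between the two parts: the clean contradiction in $(ii)$ hinges entirely on the step $z\in B$, which is precisely what part $(i)$ supplies, so $(i)$ must be in place first and its hypothesis (two common points) must be matched exactly to the pair $q_1,q_2$. A secondary point needing care is the correct labelling of the quadrilateral in $(i)$, so that the non-collinear pair $\{c,d\}$ occupies the diagonal position required by Lemma~\ref{TQL}, and the verification that $a,b,c$ is a non-degenerate triangle rather than three points on one line.
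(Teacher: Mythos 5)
Your proof is correct and takes essentially the same route as the paper: part $(i)$ uses the Quadrangle Lemma to force a triangle (equivalently, a singular plane) inside a symp of rank~2, and part $(ii)$ combines Lemma~\ref{pointcol}, the block-extension argument from the proof of Corollary~\ref{lineblock}, and part~$(i)$. The only difference is one of detail: you make explicit the final step of $(ii)$ --- applying (PS4) to $p$ and the line $q_1z\subseteq B$ inside the quadrangle $\xi(p,q_1)$ --- which the paper compresses into the bare assertion that $p$ is collinear with some point of $B'$; this is a welcome clarification, since the point $z$ itself need not be collinear with $p$.
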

\begin{proof}
Suppose two blocks $B_1,B_2$ share at least two points $x_1,x_2$. Let $y_1\in B_1\setminus B_2$ and $y_2\in B_2\setminus B_1$ with $y_1$ not collinear to $y_2$ (this is possible since both $B_1$ and $B_2$ are maximal singular subspaces).  Lemma~\ref{TQL} yields a symp containing $x_1,x_2,y_1,y_2$, contradicting the fact that the symplectic rank is 2.  This shows $(i)$.

For $(ii)$ consider a line $L\subset B$. If $p$ is collinear with a point of $L$ we are done.
Otherwise, by Lemma \ref{pointcol} and the proof of Corollary~\ref{lineblock}, there is a block $B'$ containing $L$ such that $p$ is collinear with a point of $B'$. But by
$(i)$, $B=B'$, establishing the proof.
\end{proof}

 
\begin{lemma}
\begin{itemize}\addtolength{\itemsep}{-0.4\baselineskip}
\item[$(i)$] The point-line geometry $\Delta$ is a generalized quadrangle where each line contains at least three points.
\item[$(ii)$] Every symp is an ideal subquadrangle of $\Delta$. In particular, the symplecta are either all thick or all non-thick. 
\end{itemize}
\end{lemma}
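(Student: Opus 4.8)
The plan is to verify the generalized-quadrangle axioms for $\Delta$ directly, after recording the key observation that $\Delta$-collinearity coincides with $\Gamma$-collinearity: two points lie in a common block if and only if they are collinear in $\Gamma$ (one direction because blocks are singular subspaces, the other because the line joining two collinear points lies in a block by Corollary~\ref{lineblock}). That $\Delta$ is a partial linear space is exactly the contrapositive of Lemma~\ref{moreprop}$(i)$, since two points on two distinct blocks would force those blocks to meet in two points. Each block contains a line of $\Gamma$ by Corollary~\ref{lineblock}, and such a line carries at least three points by (PS1), so every line of $\Delta$ has at least three points. For the antiflag condition, let $p\notin B$; existence of a point of $B$ collinear with $p$ is Lemma~\ref{moreprop}$(ii)$, and for uniqueness I would suppose $r_1,r_2\in B$ are both collinear with $p$. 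Then $p$ is collinear with every point of the line $r_1r_2\subseteq B$ by (PPS1), so $\{p\}\cup r_1r_2$ is a clique; the unique block $B'$ containing it shares $r_1r_2$ with $B$, forcing $B'=B$ by Lemma~\ref{moreprop}$(i)$ and hence $p\in B$, a contradiction. Non-degeneracy (each point on at least two blocks) follows because a point lies on at least two lines of any symp through it, and two distinct lines of the rank~2 polar space $H$ through a point cannot lie in a common block $B$, as then the singular subspace $B\cap H$ would contain two lines. This establishes $(i)$.

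For $(ii)$, fix a symp $H$. I would take the lines of the putative subquadrangle to be the blocks $B_\ell$ arising (uniquely, by Lemma~\ref{moreprop}$(i)$) from the lines $\ell$ of $H$, and check that restricting $\Delta$ to the point set of $H$ recovers $H$: a point $y\in H$ lies in $B_\ell$ exactly when $y\in\ell$, because $B_\ell\cap H$ is a singular subspace of the rank~2 polar space $H$ containing the line $\ell$, hence equals $\ell$. Thus $H$ embeds as a subquadrangle of $\Delta$, isomorphic to the symp $H$ itself.

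The crux, and the main obstacle, is the ideal property: every block $B$ through a point $x\in H$ must already be one of the $B_\ell$, equivalently $B\cap H$ must be a line rather than the single point $\{x\}$. I would argue by contradiction, assuming $B\cap H=\{x\}$. First, for any $p\in B\setminus\{x\}$ the set of points of $H$ collinear with $p$ is a singular subspace of $H$ containing $x$ (Lemma~\ref{subspace}); were it a line $\ell\ni x$, the block generated by the clique $\{p\}\cup\ell$ would contain both $p$ and $x$, hence coincide with $B$, giving $\ell\subseteq B\cap H$ and contradicting $B\cap H=\{x\}$. So every $p\in B\setminus\{x\}$ is collinear with $x$ \emph{only} among the points of $H$. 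Now choose $q\in H$ non-collinear with $x$ (possible by (PS2) applied in $H$); then $q$ is collinear with no point of $B$ --- not with $x$, and not with any $p\in B\setminus\{x\}$. But $q\notin B$, so the antiflag condition proved in $(i)$ forces a point of $B$ collinear with $q$, a contradiction. Hence $B\cap H$ is a line, and $H$ is an ideal subquadrangle.

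Finally, the dichotomy ``all thick or all non-thick'' I would deduce from the ideal property together with the constancy of the pencil size in a generalized quadrangle: for two non-collinear points the antiflag condition gives a bijection between their pencils of blocks, so, using connectedness, every point of $\Delta$ lies on the same number of blocks, either two (the grid case) or at least three (the thick case). Since each symp is ideal, a point of $H$ lies on exactly the blocks corresponding to the lines of $H$ through it, whence $H$ is thick precisely when the pencils of $\Delta$ have size at least three; as this holds uniformly, all symps are thick or all are non-thick. The two points I expect to require the most care are the antiflag \emph{uniqueness} in $(i)$ (which underpins everything) and the constancy argument in this last step, together with the degenerate case where $\Gamma$ contains only a single symp.
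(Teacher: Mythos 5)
Your proof is correct, and its skeleton is the same as the paper's: part $(i)$ is verified axiom by axiom using Corollary~\ref{lineblock} and Lemma~\ref{moreprop} exactly as the paper does (your antiflag uniqueness via a clique through two points of $B$ is literally the paper's ``maximal subspace intersecting $B$ in at least two points'' argument). The genuine divergences are in part $(ii)$ and the final clause. For the ideal property the paper has a two-line direct argument: given a block $B\ni x$ and $y\in H$ opposite $x$, Lemma~\ref{moreprop}$(ii)$ yields $z\in B$ collinear with $y$, and convexity immediately places $z\in\xi(x,y)=H$, so $B\cap H$ contains the line $xz$. Your contradiction route through Lemma~\ref{subspace} (showing $p^\perp\cap H=\{x\}$ for all $p\in B\setminus\{x\}$ and then violating the antiflag condition at $q$) is valid but longer; note that the line-case subargument you run is exactly what convexity gives for free. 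Conversely, you prove something the paper only asserts: the ``all thick or all non-thick'' clause gets no argument in the paper, whereas you derive it from constancy of pencil sizes via the antiflag bijection between pencils of opposite points, combined with the bijection $B\mapsto B\cap H$ between the $\Delta$-pencil and the $H$-pencil at a point of an ideal subquadrangle. Two small touch-ups there: ``using connectedness'' is not quite enough for collinear pairs $x\sim y$ --- you need a point opposite both, which Lemma~\ref{far}$(ii)$ supplies; and the degenerate single-symp case you flag cannot occur, since (PPS1)'s requirement that all three collinearity possibilities occur forces at least two symps (as the paper notes in the proof of Lemma~\ref{proper}), so Lemma~\ref{far} applies unconditionally. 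With those observations your argument is complete and, on the dichotomy, more complete than the paper's.
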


\begin{proof}
For $(i)$, we check the axioms of a polar space of rank 2.  Corollary \ref{lineblock} implies that every point is contained in at least two blocks and clearly every block contains at least three points. Whence (PS1). Since by Corollary~\ref{lineblock}, collinearity in $\Delta$ coincides with collinearity in $\Gamma$, (PS2) follows. Maximality of the blocks implies (PS3) with rank equal to 2.  

As for (PS4), consider a point $x$ not contained in a block $B$. 
Consider a line $L\subset B$ and suppose $x$ is not collinear with any point on $B$. Then by Lemma \ref{pointcol} there exists a block $B'$ containing $L$ and such that there is a point $y\in B'$ collinear with $x$. But, as before, $B=B'$ and so there is at least one point $y$ of $B$ collinear to $x$.
 If $x$ were collinear to at least two points of $B$, then we would find a maximal subspace intersecting $B$ in at least two points, a contradiction as before. 

For $(ii)$, consider a point $x$ and a symp $H$ through $x$. We need to show that each block $B$ containing $x$ intersects $H$ in a line. Consider $y\in H$ not collinear with $x$. By Lemma \ref{moreprop}$(ii)$, $y$ is collinear with a point $z\in B$ and hence it follows that $z\in\xi(x,y)= H$, establishing the proof. 
 \end{proof}

Let $B$ be a block of $\Delta$ and let $H\not\supseteq B$ be a symp of $\Gamma$, viewed as ideal subquadrangle of $\Delta$. Since $H$ is ideal, $B$ does not contain any point of $H$. Let $u\in H$ be arbitrary. Then there is a unique block $B_u$ through $u$ intersecting $B$ in a point. The block $B_u$ intersects $H$ in a line $L_u$. For $v\in H$, we easily have $L_u\cap L_v=\emptyset$ or $L_u=L_v$. Hence we obtain a partition of $H$ into lines, which is usually called a \emph{spread} of $H$. Here, the spread is said to be \emph{induced (by $B$)}. These induced spreads have an interesting property.

\begin{lemma}\label{pairregular}
\begin{itemize}\addtolength{\itemsep}{-0.4\baselineskip}
\item[$(i)$] Every pair of non-concurrent lines of any symp of $\Gamma$ is regular. 
\item[$(ii)$] If $L,M$ are two distinct lines of some induced spread $\cS$ in some symp $H$, then every member of $(\{L,M\}^\perp)^\perp$ belongs to the spread. Moreover, if the spread is induced by the block $B$, then the point-line geometry $\beta$ induced on $B$ by the lines of $\Gamma$ contained in $\cS$ contains a subspace which is isomorphic to the point-line geometry $\sigma$ induced on $\cS$ by the ``double perps'', i.e., the point-line geometry with point set $\cS$ and lines the sets $(\{L,M\}^\perp)^\perp$, with $L,M\in\cS$.
\end{itemize}
\end{lemma}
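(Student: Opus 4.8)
The plan is to route both parts through one correspondence attached to a block. Fix a block $B$, a symp $H$ with $B\cap H=\emptyset$, and the spread $\cS$ of $H$ induced by $B$; for $b\in B$ put $L_b:=b^{\perp}\cap H$, which is a singular subspace of $H$ by Lemma~\ref{subspace} and hence a line as soon as it has two points. The preliminary facts I would record are that the assignment $u\mapsto$ (the unique point of $B$ collinear with $u$) is constant on spread lines and induces a bijection between $\cS$ and a subset $B_0\subseteq B$, with inverse $b\mapsto L_b$, and that, since $\cS$ is a partition of $H$, distinct points of $B_0$ give disjoint members of $\cS$. This disjointness is exactly what detects ``equality of the associated point of $B$'' in the key step below.

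Next fix two distinct spread lines $L=L_a$ and $M=L_c$, so $a,c\in B_0$ and $ac\subseteq B$ is a line disjoint from $H$. For a transversal $T=pq\in\{L,M\}^{\perp}$, with $p\in L$ and $q\in M$, one has $a\sim p$ and $c\sim q$, while $a\not\sim q$ and $c\not\sim p$ because $L\cap M=\emptyset$; thus the four lines $ac,ap,pq,qc$ meet the hypotheses of Lemma~\ref{TQL} and lie in a common symp $H_T$, which contains $ac$ and meets $H$ in the line $T$. Conversely each $b\in ac$ lies in every $H_T$, so within the quadrangle $H_T$ it is collinear with a unique point of $T$; hence $L_b$ meets every transversal, and as these intersection points are pairwise distinct, $L_b$ is a genuine line, giving $b\in B_0$ and $L_b\in\cS$. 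So far this yields $\{L_b:b\in ac\}\subseteq(\{L,M\}^{\perp})^{\perp}\cap\cS$.

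The substantive step, and the one I expect to be the \emph{main obstacle}, is the reverse containment: every line $N$ meeting two distinct transversals $T_1,T_2$ is one of the $L_b$. Writing $s_i=N\cap T_i$ (distinct, since transversals are pairwise non-concurrent) and letting $b_i\in ac$ be the point collinear with $s_i$ inside $H_{T_i}$, I would note that $s_1\notin H_{T_2}$ (as $H_{T_2}\cap H=T_2$ is disjoint from $T_1\ni s_1$), whereas $s_2$ and $b_1$ both lie in $H_{T_2}$; Lemma~\ref{subspace} then makes $s_1^{\perp}\cap H_{T_2}$ singular, forcing $s_2\sim b_1$. Hence $s_2\in L_{b_1}\cap L_{b_2}$, and disjointness of distinct spread lines yields $b_1=b_2=:b$ and $N=s_1s_2=L_b$. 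Combining the inclusions gives $(\{L,M\}^{\perp})^{\perp}=\{L_b:b\in ac\}=\{T_1,T_2\}^{\perp}$ for any two transversals $T_1,T_2$, which is exactly regularity of $\{L,M\}$; it also proves $(\{L,M\}^{\perp})^{\perp}\subseteq\cS$, and shows that $b\leftrightarrow L_b$ carries the lines $ac\cap B_0$ of $\beta$ onto the double perps of $\sigma$, establishing the claimed isomorphism of $\sigma$ with a subspace of $\beta$. This proves $(ii)$.

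It remains to reach $(i)$ for an arbitrary non-concurrent pair $L,M$ of a symp $H$, which I would do by realizing it as a spread pair. Taking a block $B_L\supseteq L$ (which properly contains $L$ by Corollary~\ref{lineblock}) and any $a\in B_L\setminus L$ gives $a^{\perp}\cap H=L$; by Lemma~\ref{moreprop}$(ii)$ some $c\in B_M$ is collinear with $a$, and $c\notin M$ (else $c\in a^{\perp}\cap H=L$) forces $c^{\perp}\cap H=M$. The block $B$ through $ac$ then meets $H$ trivially, since a common point would lie in $L\cap M=\emptyset$, so $B$ induces a spread of $H$ through both $L$ and $M$ and the previous paragraph applies. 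The only points needing extra care are the thin and degenerate situations---ensuring at least two transversals and that each $L_b$ is a full line---which for grid symps are immediate, the thick symplecta being the essential case.
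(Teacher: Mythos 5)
Your proposal is correct and follows essentially the same route as the paper: both proofs hinge on a line of a block disjoint from the symp $H$ (your $ac$, the paper's $x_1x_2$), use the Quadrangle Lemma to produce a symp containing that line together with each transversal, and extract from this a point-by-point bijection between the block-line and the double perp, which simultaneously yields regularity and the embedding of $\sigma$ into $\beta$. The only differences are organizational---you treat spread pairs first and reduce arbitrary non-concurrent pairs to them, and you make explicit (via Lemma~\ref{subspace} applied in the auxiliary symp) the reverse inclusion $\{T_1,T_2\}^\perp\subseteq\{L_b:b\in ac\}$ that the paper leaves implicit---so your write-up is, if anything, slightly more complete on that step.
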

\begin{proof}
Consider a symp $H$ and and let $L_1,L_2$ be two non-concurrent lines of $H$. Let $B_1,B_2$ be the blocks containing $L_1,L_2$, respectively. Let $x_1\in B_1\setminus L_1$. Then there is a unique point $x_2\in B_2$ collinear with $x_1$. Since $H$ is a subquadrangle of $\Delta$, the point $x_2$ does not belong to $L_2$.  We note that the block $B$ containing $x_1$ and $x_2$ does not contain a point of $H$ because if $u\in B\cap H$, then the unique point on $B_1$ collinear with $u$ would be contained in $L_1$ since $H$ is a subquadrangle. But that point is $x_1$, a contradiction. 

Let $M_1,M_2\in\{L_1,L_2\}^\perp$ be distinct. Put $y_{i,j}= L_i\cap M_j$. The symp $\xi(x_1,y_{2,j})$, $j=1,2$, contains the lines $x_1x_2$ and $M_j$. Hence, for any point $x\in x_1x_2$, there is a unique point $y_j\in M_j$ collinear to $x$. If $y_1$ were not collinear to $y_2$, then $H=\xi(y_1,y_2)$ would contain $x$, a contradiction.  Hence $y_1$ and $y_2$ are collinear. Varying $M_2$ over $\{L_1,L_2\}^\perp\setminus\{M_1\}$, we see that the line $y_1y_2$ belongs to $(\{L_1,L_2\}^\perp)^\perp$. This shows $(\{L_1,L_2\}^\perp)^\perp=\{M_1,M_2\}^\perp$, and hence the pair $\{L_1,L_2\}$ is regular. 

But, as is obvious from the previous paragraph, all elements of $\{M_1,M_2\}^\perp$ belong to the spread induced by $B$, and the corresponding blocks intersect $B$ precisely in (all) points of the line $x_1x_2$. Hence the mapping $y_1y_2\mapsto x$ defines an injective morphism from $\sigma$ to $\beta$.  This completes the proof of the lemma.
%
%
\end{proof}


We now arrive at the crux of this section. 

\begin{theorem}\label{nonclosing}
If the symplecta of $\Gamma$ are thick generalized quadrangles, then every maximal singular subspace contains a non-closing O'Nan configuration. In particular, no maximal singular subspace is a projective space.
\end{theorem}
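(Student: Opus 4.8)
The plan is to read off the statement from the subspace embedding of Lemma~\ref{pairregular}$(ii)$. Fix a block $B$ and choose a symp $H$ with $B\cap H=\emptyset$ (such an $H$ exists by the richness of $\Gamma$ and the fact, established above, that a block meets a symp either trivially or in a full line). Then $B$ induces a spread $\cS$ of the thick generalized quadrangle $H$, and Lemma~\ref{pairregular}$(ii)$ realizes the double-perp geometry $\sigma$ (points the lines of $\cS$, lines the reguli $(\{L,M\}^\perp)^\perp$) as a subspace $\Sigma_0$ of the induced geometry on $B$, each regulus being mapped \emph{onto a full line} of $B$. Since a subspace of the linear space $B$ is closed under taking full lines, two image lines meet in $B$ \emph{exactly} when the corresponding reguli share a line of $\cS$, and three image lines are concurrent in $B$ \emph{exactly} when the three reguli share a line of $\cS$. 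Hence it suffices to exhibit in $\sigma$ four reguli, no three through a common line of $\cS$, five of whose pairs share a line of $\cS$ while the sixth pair shares none; such a configuration transfers verbatim to $B$ as a non-closing O'Nan configuration.

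To build these four reguli I would use thickness. Since $H$ is thick, every regulus and every opposite regulus has at least three members. Pick a line $T\in\cS$ and two distinct reguli $m_2,m_4$ through $T$; thickness lets me choose further lines $P,Q\in m_2$ and $R,S\in m_4$, all distinct from $T$ and from one another. Setting $m_1:=(\{P,S\}^\perp)^\perp$ and $m_3:=(\{Q,R\}^\perp)^\perp$, the five lines $P,Q,R,S,T$ are forced to be the vertices of a complete quadrilateral on $m_1,m_2,m_3,m_4$, save for the vertex $m_1\cap m_3$. That the pairwise intersections are single vertices (no three of the $m_i$ concurrent, and the $m_i$ genuinely distinct) follows from the regularity supplied by Lemma~\ref{pairregular}$(i)$, once one excludes the finitely many degenerate coincidences among $P,Q,R,S,T$.

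The heart of the argument, and the step I expect to be the main obstacle, is to show that the remaining pair is skew, i.e.\ $m_1\cap m_3=\emptyset$. This is precisely where thickness is indispensable: in a grid (a non-thick symp) the reguli degenerate and the analogous configuration always closes, so one must genuinely invoke that the opposite reguli carry at least three transversals. I would argue by contradiction. A common line $Z\in m_1\cap m_3$ would place $Z,P,S$ in one regulus and $Z,Q,R$ in another; tracing the transversals of $m_2$ and $m_4$ through these lines and applying the Quadrangle Lemma~\ref{TQL} to a suitable $4$-cycle of singular lines with non-collinear opposite vertices would produce a symp containing all of $P,Q,R,S,T,Z$, and confronting this with the reguli already present would force two distinct symps to meet in more than a singular subspace, contradicting that the symplectic rank is $2$ (exactly as in the proofs of Corollary~\ref{lineblock} and Lemma~\ref{moreprop}$(i)$). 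Carrying out this bookkeeping so that all incidences are as claimed, and so that no three of the reguli meet in a common line of $\cS$, is the delicate part.

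Granting the skew pair, the four image lines in $B$ are four distinct lines, no three concurrent, with five pairs meeting and one pair disjoint, that is, a non-closing O'Nan configuration. Finally, a projective space cannot contain such a configuration: if four lines have no three concurrent and all pairs meet except possibly one, then the two lines crossing a fixed meeting pair each cross it in two distinct points, hence both lie in the plane that pair spans, where two lines always meet, so the last pair meets as well. Therefore no maximal singular subspace of $\Gamma$ is a projective space, which proves the theorem.
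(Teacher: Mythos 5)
Your opening reduction coincides with the paper's: fix a block $B$ and a symp $H$ disjoint from it, let $\cS$ be the induced spread, and use Lemma~\ref{pairregular}$(ii)$ to transfer the problem to finding a non-closing O'Nan configuration in the double-perp geometry $\sigma$; even your quadrilateral (two double perps $m_2,m_4$ through $T$, with $P,Q\in m_2$, $R,S\in m_4$, and diagonals $m_1=(\{P,S\}^\perp)^\perp$, $m_3=(\{Q,R\}^\perp)^\perp$) has exactly the shape of the paper's configuration. But the step you yourself flag as ``the heart of the argument'', namely $m_1\cap m_3=\emptyset$, is the entire content of the theorem, and your sketch of it cannot work. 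All the spread lines $P,Q,R,S,T$, any putative common line $Z$, and all of their transversals lie inside the \emph{single} symp $H$; consequently every $4$-cycle of singular lines you can form from their points has all its vertices in $H$, and the symp produced by the Quadrangle Lemma~\ref{TQL} is $H$ itself (symps are convex and determined by any two non-collinear points in them). There is never a second symp, so a contradiction of the form ``two distinct symps meet in more than a singular subspace'' can never materialize. This is a genuine gap, not a bookkeeping issue.

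The paper's contradiction is of a different kind and lives entirely inside the quadrangle $H$: it violates the disjointness of distinct spread lines. Crucially, the paper does \emph{not} take the fourth spread line arbitrarily, as you do. Having fixed $X\,(=T)$, $L_1,L_2\,(=P,Q)$ and $M_1\,(=S)$, it picks a transversal $N$ of $(\{L_1,L_2\}^\perp)^\perp$, sets $p_i=N\cap L_i$, lets $q_1\in M_1$ be collinear with $p_1$, lets $K$ be the line through $q_1$ meeting $X$, takes $q_2\in K$ collinear with $p_2$, and only then defines $M_2\,(=R)$ as the member of $(\{X,M_1\}^\perp)^\perp$ through $q_2$. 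This adapted choice creates a web in which $p_1q_1\in\{L_1,M_1\}^\perp$, $p_2q_2\in\{L_2,M_2\}^\perp$, and $N,K$ are common transversals of $p_1q_1,p_2q_2$ both meeting $X$. A common element $Y$ of the two diagonal double perps must meet $p_1q_1$ and $p_2q_2$, so regularity (Lemma~\ref{pairregular}$(i)$) gives $X\in(\{p_1q_1,p_2q_2\}^\perp)^\perp=\{N,K\}^\perp$, forcing $X$ to meet $Y$ --- impossible since $X,Y\in\cS$ (that $Y\in\cS$ is exactly Lemma~\ref{pairregular}$(ii)$). Your version, with $P,Q,R,S$ arbitrary, amounts to asserting that \emph{no} quadrilateral in $\sigma$ ever closes; that stronger statement holds for Hermitian unitals (O'Nan's theorem) but nothing in your argument establishes it for the exotic quadrangles allowed here. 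To repair the proof you should either adopt the paper's adapted construction of the fourth line, or find a genuinely new argument for the arbitrary-choice claim.
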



\begin{proof}
Let $H$ be any symp and let $\cS$ be a spread induced by some block of $\Delta$ outside $H$. By the previous lemma, it suffices to show that the geometry $\sigma$ with point set $\cS$ and lines the double perps contains non-closing O'Nan configurations. 

Consider lines $X,L_{1},L_{2},M_{1}\in\cS$ such that $X\in(\{L_{1},L_{2}\}^\perp)^\perp=:R$, $M_1\notin R$, see Figure~\ref{pic}. Consider a line $N\in R^{\perp}$ and the points $p_{1}=N\cap L_{1}$ and $p_{2}=N\cap L_{2}$. Let $q_{1}$ be the unique point of $M_1$ collinear to $p_{1}$. Consider the unique line $K$ incident with $q_1$ and concurrent with $X$.  
Let $q_{2}$ be the unique point of $K$ collinear to $p_{2}$.
Let $M_{2}\in(\{X,M_1\}^\perp)^\perp$ be such that $q_2\in M_2$. Suppose first there exists $Y\in(\{L_1,M_1\}^\perp)^\perp\cap(\{L_2,M_2\}^\perp)^\perp\cap\cL$. Then $Y\in S$ by Lemma~\ref{pairregular}$(ii)$.

As $p_iq_i\in\{L_i,M_i\}^\perp$, $i=1,2$, the line $Y$ meets both $p_1q_1$ and $p_2q_2$. Hence $Y,N,K\in\{p_1q_1,p_2q_2\}^\perp$. Since $X$ intersects both $N$ and $K$, we have $X\in(\{p_1q_1,p_2q_2\}^\perp)^\perp$. By regularity $Y$ is concurrent with $X$, a contradiction to $X,Y\in\cS$. Hence $(\{L_1,M_1\}^\perp)^\perp\cap(\{L_2,M_2\}^\perp)^\perp=\emptyset$, and the four double perps $(\{L_1,L_2\}^\perp)^\perp$, $(\{L_1,M_1\}^\perp)^\perp$, $(\{M_1,M_2\}^\perp)^\perp$ and $(\{L_2,M_2\}^\perp)^\perp$ form a non-closing O'Nan configuration. 
\end{proof}

Theorem~\ref{nonclosing} has been shown by Cohen in \cite{Cohen}, Proposition~4.2, for \emph{classical} generalized quadrangles, i.e., for generalized quadrangles occurring as point-residues in polar spaces of rank 3, with an extensive and explicit calculation. More recently,  Shult \& K. Thas~\cite{KoenErnie} have found a more elegant proof  only using the \emph{Moufang property} of such quadrangles (and in fact, they only use the \emph{strong transitivity}, i.e., the transitivity of the automorphism group on the set of pairs of incident point-line pairs $(p,L),(q,M)$ such that $p,q$ are not collinear and $L,M$ not concurrent). Finally, our proof above only  uses the fact that for some line $L$ of the spread, there exists at least one line $M$ not in the spread such that $\{L,M\}$ is a regular pair. 

Note that in the finite case one does not need any of the just mentioned additional conditions, as noted by both Cohen and Shult \& K. Thas in the above references. 

We now present a class of examples of imbrex geometries of symplecic rank 2 among which many with thick symps. In this case, the maximal singular subspaces can be seen as higher dimensional unitals.

Let $\Delta$ be any generalized quadrangle whose dual $\Omega$ is  embedded in some finite-dimensional projective space $\mathbb{P}^n(\L)$ over the skew field $\L$ which is finite-dimensional over its center, i.e., the blocks of $\Delta$ are points of $\mathbb{P}^n(\L)$ and the points of $\Delta$ are lines of  $\mathbb{P}^n(\L)$ (with natural incidence). All such $\Omega$ are classified, see \cite{Ste-Mal:99,Ste-Mal:00} (every classical generalized quadrangle in the above sense qualifies, except that for some the dimension of the projective space is infinite). Now let $\Gamma=(\cP,\cL,*)$ be the point-line geometry with same point set as $\Delta$, and where the lines correspond to planes of  $\mathbb{P}^n(\L)$ containing at least two lines of $\Omega$, with natural incidence. Axiom~(PPS1) is easy to check;  for Axiom~(PPS2) let $L,M$ be the lines of $\mathbb{P}^n(\L)$ corresponding to two non-collinear points of $\Gamma$. Then $\<L,M\>$ is 3-dimensional. Now $\<L,M\>$ induces a subquadrangle $H$ of $\Omega$, and $H$ is generated by $L,M$. Hence (PPS2) holds with the symplecta being the subquadrangles of $\Delta$ corresponding to $3$-spaces of  $\mathbb{P}^n(\L)$ containing two non-intersecting lines of $\Omega$, if we assume that these quadrangles are thick (this is needed for (PS1)). Finally, Axiom~(PPS4) follows from the finite-dimensonality of  $\mathbb{P}^n(\L)$ and of $\L$ over its center. 

We now check the Axiom (Imb). Translated to $\Omega$, we have to show that for given lines $L,M_{1},M_{2}$ of $\Omega$ such that $M_{1}$ and $M_{2}$ intersect and $L$ does not intersect the plane $\langle M_{1},M_{2}\rangle$, the 3-spaces $\langle L,M_{1} \rangle$ and $\langle L,M_{2} \rangle$ intersect in a plane $\pi$ containing at least two lines of $\Omega$. Clearly $\pi=\<L,M_1\cap M_2\>$ and this contains, besides $L$, also the line through $M_1\cap M_2$ intersecting $L$.  

In the finite case, there is just one class of examples, namely where $\Omega$ is the Hermitian quadrangle denoted by $Q(4,q^2)$. The maximal singular subspaces of $\Gamma$ are in this case classical unitals. In the infinite case, all quadrangles arising from a $\sigma$-quadratic form, with $\sigma$ nontrivial, qualify.

Similarly, other classes of examples are the exceptional Moufang quadrangles of types $\mathsf{F_4},\mathsf{E_6},\mathsf{E_7}$ and $\mathsf{E_8}$. Using the terminology of Appendix C of \cite{hvm}, if we let points correspond to the isotropic orbit in the Tits diagram which correspond to the multiple root in the root system of type $BC_2$, then the symplecta are the ideal subquadrangles belonging to a root system of type $C_2$. Again, all axioms of an imbrex geometry hold. 


\section{Imbrex geometries of symplectic rank at least 3}\label{rank3}
In this section, $\Gamma=(\cP,\cL,*)$ is an imbrex geometry of symplectic rank $r$ at least 3.

\begin{lemma}\label{(CC1)}
Let $(x,H)$ be a non-incident point-symplecton pair in $\Gamma$. If $x$ is collinear with all points of a line $L\subseteq H$, then $x$ is collinear with all points of a maximal singular subspace of $H$.
\end{lemma}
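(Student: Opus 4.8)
The plan is to first pin down the natural candidate for the maximal singular subspace and then establish its maximality. By Lemma~\ref{subspace}, the set $S$ of points of $H$ collinear with $x$ is a singular subspace of $H$, and by hypothesis $S$ contains the line $L$, so $S$ has dimension at least $1$. Since $S$ is exactly the set of points of $H$ collinear with $x$, the statement is equivalent to the assertion that $S$ is a \emph{maximal} singular subspace of $H$, and this is what I would prove.

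I would argue by contradiction. Assume $S$ is not maximal. As $H$ is a polar space of rank $r\ge 3$, there is then a point $p\in H$ collinear with every point of $S$ but with $p\notin S$; in particular $x$ is not collinear with $p$, so the symp $H':=\xi(x,p)$ exists. The first useful observation is that $S\subseteq H'$: each point $s\in S$ is collinear with the two non-collinear points $x$ and $p$, hence lies on a shortest path between them and so belongs to the convex subspace $\xi(x,p)$. Consequently $S\subseteq H\cap H'$, while $p\in(H\cap H')\setminus S$, so $H$ and $H'$ are two distinct symps meeting in a singular subspace that properly contains $S$, namely one containing $\langle S,p\rangle$.

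The crux is now to convert the existence of such a $p$ into a violation of~(Imb). The mechanism I would pursue is to exhibit, inside $H$, a line $M$ no point of which is collinear with $x$ — so that (Imb) applies to $x$ and $M$ — chosen so that for two points $m_1,m_2$ of $M$ the symps $\xi(x,m_1)$ and $\xi(x,m_2)$ would be forced to meet in a singular subspace that is \emph{not} maximal, contradicting the conclusion of~(Imb). The intended control on this intersection comes from arranging that $m_1,m_2$ are collinear with all of $L$, which forces $L$ into each of $\xi(x,m_1),\xi(x,m_2)$ and hence into their intersection, together with the hypothesis $r\ge 3$, under which a line is never a maximal singular subspace.

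I expect the main obstacle to be precisely the choice of $M$ and the proof that $\xi(x,m_1)\cap\xi(x,m_2)$ genuinely fails to be maximal: a careless application of (Imb) is harmless, because two symps through $x$ are in general perfectly allowed to meet in a maximal singular subspace containing $S$, so the contradiction has to be engineered rather than read off. Symplectic rank $3$ looks like the delicate case: there every singular subspace containing $L$ is $L$ itself or a plane, and every line collinear with all of $L$ is coplanar with, hence meets, $L$, so a single line $M$ cannot at once be collinear with all of $L$ and avoid $S$. Resolving this — separating cases by $\dim S$, and handling $S=L$ by a dedicated argument (for instance analysing the pencil of symps through $L$ on the side of $x$) rather than by a naive use of (Imb) — is the step I anticipate requiring the most care.
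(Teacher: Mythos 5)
Your reduction (show that $S = \{$points of $H$ collinear with $x\}$ is a maximal singular subspace of $H$) and your contradiction setup are fine, but the proof stops exactly where it has to start: the line $M$ and the violation of (Imb) are never actually produced, and you yourself flag this as the unresolved step. This is a genuine gap, not a detail, and the direction in which you try to apply (Imb) is the unprofitable one. With $x$ as the external point and $M \subseteq H$, the symps $\xi(x,m_1),\xi(x,m_2)$ are symps through $x$ \emph{other than} $H$, and (Imb) only says their mutual intersection is maximal \emph{in them}; there is no mechanism transferring that information to maximality of $S$ inside $H$, which is why the contradiction cannot be ``read off''. Your choice of $p$ (collinear with \emph{all} of $S$) makes matters worse: every point of $L$ is then collinear with $p$, so $p$ can never serve as the external point of an (Imb) configuration involving $x$ and a point of $L$.

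The paper's proof is direct (no contradiction, no case split on $r$ or on $\dim S$) and applies (Imb) with the roles reversed: the external point lies in $H$ and the (Imb)-line passes through $x$. Choose $p \in H$ \emph{not} collinear with all points of $L$; then $p$ is collinear with a unique point $p' \in L$, and $p$ is not collinear with $x$ (otherwise $p$ would lie in the singular subspace $S \supseteq L$). Put $H' = \xi(p,x)$ and pick $x' \in L \setminus \{p'\}$. No point $y$ of the line $xx'$ is collinear with $p$: such a $y$ would lie in $H'$ by convexity, forcing $x' \in H'$ and hence $H' = \xi(p,x') = H$, absurd since $x \notin H$. Now (Imb), applied to the point $p$ and the line $xx'$, yields that $H = \xi(p,x')$ and $H' = \xi(p,x)$ meet in a singular subspace $U$ maximal in both. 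Let $U'$ be the hyperplane of $U$ consisting of the points of $U$ collinear with $x$ (computed in $H'$); since $x' \notin H'$ we have $x' \notin U$, each $u \in U'$ is collinear with $x'$ (otherwise convexity would put $x$ inside $\xi(u,x') = H$), and $\langle U', x' \rangle$ is then a maximal singular subspace of $H$ all of whose points are collinear with $x$. This reversal --- external point inside $H$, chosen \emph{far} from $L$ rather than close to $S$, and the (Imb)-line $xx'$ --- is the missing idea.
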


\begin{proof}
Let $p\in H$ be a point which is not collinear to all points of $L$, and let $H'=\xi(p,x)$. Then by convexity $H'$ contains the unique point $p'$ on $L$ collinear to $p$ in $H$. Select $x'\in L$ different from $p'$. Then we claim that no point on the line $x x'$ is collinear to $p$. Indeed, suppose some point $y\in xx'$ is collinear to $p$, then $y\in H'$, and hence also $x'\in H'$, contradicting $H=\xi(p,x')$. Our claim is proved.

Now Axiom (Imb) implies that $H$ and $H'$ intersect in a maximal singular subspace $U$. In $H'$, the point $x$ is collinear with an $(r-1)$-subspace $U'$ of $U$, which does not contain $x'$ as  $x'\notin H'$. Hence $x$ is collinear with all points of $U'$ and with $x'$, and these must then generate a maximal singular subspace.
\end{proof}

Now we can apply Theorem 15.4.5 of \cite{Shu:12}. The latter theorem is an updated summary of the work done by Cohen \cite{Cohen} and Cohen \& Cooperstein \cite{COCO}. Since we assume diameter 2, our conclusion will only contain a restricted list of geometries. Without going into too much detail, we define the \emph{Lie incidence geometry} $\mathsf{X}_{n,i}(\K)$, where $\mathsf{X}\in\{\mathsf{A},\mathsf{D},\mathsf{E}\}$, as the $i$-Grassmannian point-line geometry related to the building of type $\mathsf{X}_n$ over the skew field $\K$ (which is automatically a field for the cases $\mathsf{D}$ and $\mathsf{E}$), and we use Bourbaki labeling \cite{Bourbaki} for the nodes of the corresponding diagram, and hence for the subscripts.  For instance, the Lie incidence geometry $\mathsf{A}_{n,i}$ is the ordinary Grassmannian geometry of all $i$-spaces of the $(n+1)$-dimensional vector space over $\K$. 


\begin{cor}\label{rank3classification}
An imbrex geometry $\Gamma$ of rank $r$ at least 3 is exactly one of the following.
\begin{itemize}\addtolength{\itemsep}{-0.4\baselineskip}
\item[$(r=3)$] The Lie incidence geometry $\mathsf{A_{n,2}}(\L)$, for $\L$ any skew field, $n\geq 4$. 
\item[$(r=4)$] The Lie incidence geometry $\mathsf{D_{5,5}}(\mathbb{K})$, for any field $\K$. 
\item[$(r=5)$] The Lie incidence geometry $\mathsf{E_{6,1}}(\mathbb{K})$, for any field $\K$.
\end{itemize}
\end{cor}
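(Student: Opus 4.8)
The plan is to reduce the statement to the Cohen--Cooperstein classification of parapolar spaces, as packaged in Theorem 15.4.5 of \cite{Shu:12}, and then to exploit the diameter~2 hypothesis to cut the resulting list down to three families. The essential observation is that the hypothesis of that theorem is a purely local condition on non-incident point-symplecton pairs, and that Lemma~\ref{(CC1)} is exactly the missing ingredient needed to verify it; the remaining ingredients (constant symplectic rank $\geq 3$, strongness, diameter~2) are built into the definition of an imbrex geometry.

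First I would record the local condition in the precise form demanded by the classification. By Lemma~\ref{subspace}, for any non-incident point-symp pair $(x,H)$ the set $x^\perp\cap H$ is a singular subspace of $H$. If this subspace contains a line, then Lemma~\ref{(CC1)} produces a maximal singular subspace $M$ of $H$ all of whose points are collinear with $x$; since $x^\perp\cap H$ is itself singular and contains the maximal subspace $M$, it must coincide with $M$. Hence for every non-incident point-symp pair, $x^\perp\cap H$ is empty, a single point, or a maximal singular subspace of $H$. Combined with the standing hypothesis that the symplectic rank is constant and equal to $r\geq 3$, this is exactly the configuration under which Theorem 15.4.5 applies.

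The second step is to invoke the theorem and read off the surviving entries. Shult's theorem returns a list of Lie incidence geometries of symplectic rank $\geq 3$; because an imbrex geometry is by definition a strong parapolar space of diameter~2, I would discard every entry of diameter larger than $2$, and the outcome is then governed entirely by the value of $r$. For $r=3$ the only diameter-2 survivors are the line Grassmannians $\mathsf{A}_{n,2}(\L)$ over a skew field $\L$, with $n\geq 4$ (here $\mathsf{A}_{3,2}$ is the Klein quadric, a polar space, hence not a proper parapolar space). For $r=4$ the half-spin geometries $\mathsf{D}_{n,n}$ have diameter $\lfloor n/2\rfloor$, so only $\mathsf{D}_{5,5}(\K)$ qualifies, while $\mathsf{D}_{4,4}$ collapses to a polar space under triality and is excluded. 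For $r=5$ the only diameter-2 exceptional geometry is $\mathsf{E}_{6,1}(\K)$, the geometry attached to the Hjelmslev--Moufang plane, whereas $\mathsf{E}_{7,7}$ already has diameter~3. The coordinatizing structures $\K$ are forced to be commutative in the $\mathsf{D}$ and $\mathsf{E}$ cases, as the statement records.

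The substance of the argument is almost entirely imported, so the genuine obstacle on our side is confined to the first step: one must make sure that the local point-symp condition extracted from Lemmas~\ref{subspace} and~\ref{(CC1)} matches \emph{verbatim} the hypothesis of the cited theorem, with no hidden non-degeneracy, thickness, or constant-rank assumption beyond what an imbrex geometry already supplies. Once that match is confirmed, the diameter-2 pruning of the last step is routine bookkeeping, requiring only the standard diameters of the Grassmannian, half-spin and exceptional geometries together with the remark that the two low-rank degenerations ($\mathsf{A}_{3,2}$ and $\mathsf{D}_{4,4}$) are polar rather than properly parapolar.
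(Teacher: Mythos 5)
Your treatment of the forward implication is correct and is precisely the paper's route: Lemma~\ref{subspace} together with Lemma~\ref{(CC1)} shows that for every non-incident point-symplecton pair $(x,H)$ the set of points of $H$ collinear with $x$ is empty, a single point, or a maximal singular subspace of $H$ (your maximality argument -- a singular subspace containing a maximal one equals it -- is exactly right), and this, with constant symplectic rank $r\geq 3$, strongness, diameter $2$ and (PPS4), is what feeds Theorem~15.4.5 of Shult. The diameter-$2$ pruning you then perform (keeping $\mathsf{A_{n,2}}(\L)$, $n\geq 4$, $\mathsf{D_{5,5}}(\K)$, $\mathsf{E_{6,1}}(\K)$, and discarding $\mathsf{A_{3,2}}$ and $\mathsf{D_{4,4}}$ as polar spaces -- note these even fail (PPS1), since in a polar space the possibility ``no point of $L$ collinear with $x$'' never occurs -- as well as the higher-diameter Grassmannians, half-spin geometries and $\mathsf{E_{7,7}}$) is the bookkeeping the paper leaves implicit when it writes that the assertion ``follows from Theorem 15.4.5''.

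There is, however, a genuine omission when your proposal is compared with the paper's proof: the paper reads ``is exactly one of the following'' as a two-sided classification, and half of its proof is the converse, namely that $\mathsf{A_{n,2}}(\L)$, $\mathsf{D_{5,5}}(\K)$ and $\mathsf{E_{6,1}}(\K)$ genuinely \emph{are} imbrex geometries, i.e., satisfy (Imb) (and (PPS4)). This is not contained in the Cohen--Cooperstein theorem and is not automatic: (Imb) demands that two symps through a point $x$ and through distinct points of a line $L$ far from $x$ meet in a \emph{maximal} singular subspace of both. The paper establishes this in Section~\ref{Application}, by showing that the universal embeddings of these geometries are local Mazzocca-Melone sets; the key step there is the claim that $H\cap H'$ has dimension $r-1$, proved via the Quadrangle Lemma and the collinearity structure of the embedded geometries, ``hence that (Imb) holds''. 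Without this direction your argument only shows that every imbrex geometry of rank $\geq 3$ appears in the list, not that every entry of the list is attained, so the ``exactly'' is not fully justified. (Note that supplying this half by citing Section~\ref{Application} creates no circularity, since the proof of Theorem~\ref{classification} there uses only the forward direction of the corollary.)
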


\begin{proof}
If $\Gamma$ is an imbrex geometry of rank at least $3$, then the assertion follows from Theorem 15.4.5 of \cite{Shu:12}. Conversely,  if $\Gamma$ is one of the mentioned geometries, then we will prove in the next section that their universal embedding is a local Mazzocca-Melone set (for definitions, see the next section), and this will immediately imply that they satisfy (Imb). Hence we postpone the details to the next section.
\end{proof}

\section{Local Mazzocca-Melone sets}\label{Application}
We now apply the classification of the previous section to the theory of Mazzocca-Melone sets, initiated in \cite{JSHVMBIG}. We introduce pre-Mazzocca-Melone sets and local Mazzocca-Melone sets in the next subsection. 

\subsection{Definition of local Mazzocca-Melone sets}
Let $N,d,r$ be natural numbers. Let $X$ be a spanning point set of  $\mathbb{P}^n(\K)$, with $\mathbb{K}$ any skew field, and let $\Xi$ be a collection of $(d+1)$-spaces of  $\mathbb{P}^n(\K)$, $d\geq 2$, such that, for any $\xi\in\Xi$, the intersection $\xi\cap X=:X(\xi)$ is a polar space of rank $r$, $r\geq 2$,
in $\xi$ (and then, for $x\in X(\xi)$, we denote the tangent space at $x$ to $X(\xi)$ by $T_x(X(\xi))$ or sometimes simply by $T_x(\xi)$). Also, we say that two points of $X$ are $X$-\emph{collinear} if all points of the line they span in  $\mathbb{P}^n(\K)$ are contained in $X$. We call $(X,\Xi)$ a \emph{pre-Mazzocca-Melone set (of type $(d,r)$)} if (MM1) and (MM2) are satisfied, and a \emph{local Mazzocca-Melone set} if on top (LMM3) holds. 

\begin{itemize}\addtolength{\itemsep}{-0.4\baselineskip}
\item[(MM1)] Any pair of points $x$ and $y$ of $X$ which are not $X$-collinear lies in at least one element of $\Xi$.

\item[(MM2)] If $\xi_1,\xi_2\in \Xi$, with $\xi_1\neq \xi_2$, then $\xi_1\cap\xi_2\subset X$.
\end{itemize}

It follows from (MM1) and (MM2) that any pair of points $x$ and $y$ of $X$ which are not $X$-collinear lies in exactly one member of $\Xi$, and we denote that member by $[x,y]$. The pre-Mazzocca-Melone set $(X,\Xi)$ is called \emph{proper} if $|\Xi|\geq 2$. It follows from these axioms that proper pre-Mazzocca-Melone sets define parapolar spaces where the symps are the polar spaces $X(\xi)$, $\xi\in\Xi$, see~\cite{JSHVMBIG}. 

\begin{itemize}\addtolength{\itemsep}{-0.4\baselineskip}
\item[(LMM3)] If $x\in X$ and $L\subseteq X$ is a line of  $\mathbb{P}^n(\K)$ such that no point of $L$ is $X$-collinear with $x$, then all $d$-spaces $T_x([x,y])$, $y\in L$, generate a subspace  $T_{x,L}$ of  $\mathbb{P}^n(\K)$ of dimension at most $2d-r+1$.
\end{itemize}

A local Mazzocca-Melone set is \emph{proper} if it is a proper pre-Mazzocca-Melone set. Non-proper pre-Mazzocca-Melone sets are just embedded polar spaces. In this section we shall classify all proper local Mazzocca-Melone sets of type $(d,r)$, for all $d,r\geq 2$.

\subsection{Examples of proper local Mazzocca-Melone sets}
In \cite{Tha-Mal:13}, it is proved that the \emph{Segre variety} $\mathcal{S}_{p,q}(\K)$, $p\geq 1$ and $q\geq 1$, satisfies the axioms of a local Mazzocca-Melone set (although it was not called as such in \cite{Tha-Mal:13}). 

Now, the Lie incidence geometries $\mathsf{A_{n,2}}(\L)$, for $\L$ any skew field, $n\geq 4$, $\mathsf{D_{5,5}}(\mathbb{K})$, for any field $\K$, and $\mathsf{E_{6,1}}(\mathbb{K})$, for any field $\K$, all admit the so-called \emph{universal embedding}, which is a pre-Mazzocca-Melone set, as proved in \cite{JSHVMBIG}. We denote these pre-Mazzocca-Melone sets by $\mathcal{A}_{p,2}(\L)$, $p\geq 4$, $\mathcal{D}_{5,5}(\K)$ and $\mathcal{E}_{6,1}(\K)$, respectively.

We now show that these are local Mazzocca-Melone sets. By inclusion of the appropriate sets, it suffices to show (LMM3) for the minimal cases, namely, $\mathcal{A}_{4,2}(\L)$, $\mathcal{D}_{5,5}(\K)$ and $\mathcal{E}_{6,1}(\K)$. Note that in this list, a preceding one is the \emph{residue} of the next one, i.e., if $x$ is a point in one of these sets $X$, then the lines of $X$ through $x$ form, together with the tangent spaces to the symps at $x$, a pre-Mazzocca-Melone set in a subspace of the quotient projective space with respect to $x$ isomorphic to the previous Mazzocca-Melone set in the list. Hence the assertion will be proved if we show that the validity of (LMM3) for a given set follows from the validity of that axiom in any residue, adding $\mathcal{S}_{1,2}(\K)$ in front of that list. 

So suppose (LMM3) is valid in the residue $(X_p,\Xi_p)$ of $(X,\Xi)$ at $p\in X$, for all $p\in X$, with $(X,\Xi)$ one of the pre-Mazzocca-Melone sets $\mathcal{A}_{4,2}(\L)$, $\mathsf{D}_{5}(\K)$ or $\mathsf{E}_{6,1}(\K)$. Let $r-1$ be the dimension of the maximal singular subspaces of the symps (having rank $r$) of $(X,\Xi)$ (which are then hyperbolic quadrics in $(2r-1)$-dimensional spaces, and $d=2r-2$). Let $x\in X$ and let $L$ be a line contained in $X$ no point of which is $X$-collinear with $x$. Let $H$ and $H'$ be two distinct symps containing $x$ and a (different) point of $L$, say $y,y'$, respectively. We first claim that $H\cap H'$ has dimension $r-1$ (hence that (Imb) holds). Since $y'$ is collinear with the point $y$ of $H$, it is collinear with an $(r-1)$-space $U$ of $H$ (this can be checked in all instances directly from the definitions of the corresponding Lie incidence geometries). There is a unique $(r-2)$-space $W\subseteq U$ all of whose points are collinear with $x$. By the Quadrangle Lemma, the singular $(r-1)$-space $\<x,W\>$ is contained in $H'$, showing our claim. 

Moreover, since both $y$ and $y'$ are collinear with all points of $W$, all points of $L$ are collinear with all points of $W$ and $\<L,W\>$ is a singular $r$-space. The Quadrangle Lemma implies that any symp $H^*$ through $x$ and a point of $L$ contains $W$. Since $r\geq 3$, we can select two distinct points $q,q'$ of $W$.  Clearly, $\<H^*\>$ is generated by $T_q(H^*)$ and $T_{q'}(H^*)$. Hence $\<T_{x,L},L\>$ is generated by all $T_q(H^*)$ and $T_{q'}(H^*)$, for $H^*$ running through all symps through $x$ and a point of $L$. By induction, the dimension of the span of all $T_q(H^*)$ for $H^*$ as above, is equal to $2(2r-4)-(r-1)+1+(2+1)=3r-3$ (in the residue at $q$, the point $\<q,x\>$ is not $X_q$-collinear to any point of $\<q,L\>$). Similarly for the span of all $T_{q'}(H^*)$. The intersection of those two spaces is, by (double) induction $2(2r-6)-(r-2)+1+(2+2)=3r-5$ (look in the residue of the line $\<q,q'\>$; for $r=2$, there is no ``double'' induction, but then this can be seen directly in $\mathcal{S}_{1,2}(\K)$). Hence the dimension of   $\<T_{x,L},L\>$ is equal to $3r-1$. Hence, since $L$ does not meet $T_x(H)$, we see that $\dim(T_{x,L})=3r-3=2(2r-2)-r+1$, exactly what we had to prove.



\subsection{Classification of Local Mazzocca-Melone sets}\label{sec:main results}

Using the results on \HJ geometries we now classify all local Mazzocca-Melone sets, up to projection from a suitable subspace (i.e., a subspace containing no point in the span of two symplecta). 


\begin{theorem}\label{classification}
{A proper local Mazzocca-Melone set of type $(d,r)$, $d,r\geq 2$ is projectively equivalent to a projection from a suitable subspace (in the above sense) of one of the following pre-Mazzocca-Melone sets.
\begin{itemize}\addtolength{\itemsep}{-0.4\baselineskip}
\item[$d=2$:]
the Segre variety $\mathcal{S}_{p,q}(\K)$, $p\geq 1$ and $q\geq 1$, 
\item[$d=4$:]
$\mathcal{A}_{p,2}(\K)$, $p\geq 4$, 
\item[$d=6$:]
$\mathcal{D}_{5,5}(\K)$, 
\item[$d=8$:] $\mathcal{E}_{6,1}(\K)$.
\end{itemize}
}
\end{theorem}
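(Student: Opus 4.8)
The plan is to translate the local Mazzocca-Melone hypothesis into the abstract theory of \HJ geometries, and then to recover the projective picture through the universal embedding. First I would invoke \cite{JSHVMBIG} to see that a proper pre-Mazzocca-Melone set of type $(d,r)$ yields a strong parapolar space $\Gamma$ of diameter $2$ whose symps are exactly the polar spaces $X(\xi)$, $\xi\in\Xi$, each of rank $r$; since $X$ spans the finite-dimensional $\mathbb{P}^n(\K)$, every nested chain of singular subspaces is finite, so (PPS4) holds. All symps having rank $r$, the symplectic rank is constant.

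The decisive step is to show that here (LMM3) is equivalent to (Imb), so that $\Gamma$ is an \HJ geometry of symplectic rank $r$. Given a point $x$ and a line $L\subseteq X$ no point of which is $X$-collinear with $x$, pick distinct $y_1,y_2\in L$ and set $H_i=[x,y_i]=\xi(x,y_i)$. These symps are distinct: a single symp through $x$ containing both $y_1$ and $y_2$ would contain the line $L$, and then (PS4) inside that symp would make $x$ collinear with a point of $L$. Now each $T_x(H_i)$ is a $d$-space of $\xi_i$, and by (MM2) one has $\xi_1\cap\xi_2\subseteq X$, whence $H_1\cap H_2=\xi_1\cap\xi_2=:S$ is a singular subspace through $x$. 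Since $T_x(H_i)\subseteq\xi_i$, we get $T_x(H_1)\cap T_x(H_2)\subseteq S$, while $S\subseteq T_x(H_i)$ because $S$ is singular and contains $x$; hence $T_x(H_1)\cap T_x(H_2)=S$ and $\dim\<T_x(H_1),T_x(H_2)\>=2d-\dim S$. As $\<T_x(H_1),T_x(H_2)\>\subseteq T_{x,L}$, the bound $\dim T_{x,L}\le 2d-r+1$ forces $\dim S\ge r-1$, and since $S$ is singular in a rank-$r$ polar space, $\dim S=r-1$. Thus $S$ is maximal in both symps, which is exactly (Imb).

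Then I would feed $\Gamma$ into the \HJ classification. If $r\ge 3$, Corollary~\ref{rank3classification} identifies $\Gamma$ abstractly as $\mathsf{A}_{n,2}(\L)$, $\mathsf{D}_{5,5}(\K)$ or $\mathsf{E}_{6,1}(\K)$; in each case the symps are hyperbolic quadrics of rank $r\in\{3,4,5\}$, which span $(2r-1)$-spaces, so $d=2r-2\in\{4,6,8\}$ (in particular, types with $d\neq 2r-2$ or $r\geq 6$ yield no examples). If $r=2$, the symps are generalized quadrangles; since every singular subspace is a projective subspace of $\mathbb{P}^n(\K)$, in which all O'Nan configurations close, Theorem~\ref{nonclosing} rules out thick symps, so they are grids and $d=2$. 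In this last case the configuration is precisely the one handled by the Segre characterization of \cite{Tha-Mal:13}, which I would cite to obtain $\mathcal{S}_{p,q}(\K)$.

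Finally, in every case the abstract geometry carries the universal embedding ($\mathcal{A}_{p,2}$, $\mathcal{D}_{5,5}$, $\mathcal{E}_{6,1}$, or $\mathcal{S}_{p,q}$), already exhibited above and in \cite{JSHVMBIG,Tha-Mal:13} as a (local) Mazzocca-Melone set. The given realization $X\subseteq\mathbb{P}^n(\K)$ is another projective embedding of the same geometry, so by universality it is the projection of the universal embedding from some subspace $K$, and (MM2) together with the non-degeneracy of the symps of $X$ forces $K$ to meet no span $\<\xi_i,\xi_j\>$, i.e.\ $K$ is suitable. I expect the main obstacle to be exactly this last, embedding-theoretic step: the abstract classification pins down $\Gamma$, but realizing the given projective embedding as a projection of the universal one, and checking that the projection centre keeps every symp a full polar space so that (MM1)--(MM2) persist, is where the genuine work lies; for $r=2$ this bookkeeping is absorbed into \cite{Tha-Mal:13}.
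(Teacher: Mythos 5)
Your proposal is correct and takes essentially the same route as the paper: translate the hypotheses into an imbrex geometry, dispose of $r=2$ via Theorem~\ref{nonclosing} (singular subspaces are projective, hence contain no non-closing O'Nan configurations, so the symps are grids and the Segre characterization of \cite{Tha-Mal:13} applies), dispose of $r\geq 3$ via Corollary~\ref{rank3classification}, and finish with the fact that every embedding of these geometries is a suitable projection of the universal one. Your explicit dimension count showing that (LMM3) forces (Imb) (namely $T_x(H_1)\cap T_x(H_2)=\xi_1\cap\xi_2$ together with $2d-\dim(\xi_1\cap\xi_2)\leq 2d-r+1$) is exactly the content behind the paper's unelaborated opening claim that a local Mazzocca-Melone set is ``clearly'' an imbrex geometry.
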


\begin{proof} 
Clearly, a local Mazzocca-Melone set is an \HJ geometry. 
So we can apply the results of Section \ref{Imbrex}.
Suppose first that $r=2$. Then the symplectic rank as a parapolar space is 2, and as the geometry is embedded, all singular subspaces are projective spaces and hence do not contain non-closed O'Nan configurations. By Theorem \ref{nonclosing}, we are done. If $r\geq 3$, we use Corollary \ref{rank3classification}, and the fact that all embeddings arise from the universal embedding by suitable projection.
\end{proof}

\subsection{Weakening the (Imb) axiom}

If we would not care about the residual property of (Imb), then an interesting option would be to weaken it to the following condition.
\begin{itemize}
\item[(Imb$^*$)]  \em Let $x$ be a point not collinear with any point of the line $L$. Let $y_1,y_2$ be distinct points on $L$. Then the symplecta $\xi(x,y_1)$ and $\xi(x,y_2)$ intersect in a singular subspace of dimension at least $1$.   
\end{itemize}
This does not guarantee constant symplectic rank from the beginning, and we were not able to classify strong parapolar spaces of rank 2 and symplectic rank at least $3$ under Condition~(Imb$^*$). However, it would be interesting to do so, since, from the point of view of Hjelmslev, this is the weakest condition one can ask. We pose it here as an open problem.

{\bf Acknowledgement} The authors would like to thank the Mathematisches Forschungsinstitut Oberwolfach for providing them with the best possible research environment imaginable during a stay as Oberwolfach Leibniz Fellow (JS) and Visiting Senior Researcher (HVM) in June 2013.

\end{document}